\DeclareMathOperator{\var}{Var }
\DeclareMathOperator{\supp}{supp}
\newcommand*{\Relbarfill@}{\arrowfill@\Relbar\Relbar\Relbar}
\newcommand*{\xeq}[2][]{\ext@arrow 0055\Relbarfill@{#1}{#2}}
\theoremstyle{plain}
\newtheorem{thm}{Theorem}[section] 
\newtheorem{lem}[thm]{Lemma}
\newtheorem{prop}[thm]{Proposition}
\newtheorem{cor}[thm]{Corollary}
\theoremstyle{definition}
\newtheorem{defn}[thm]{Definition}
\newtheorem{rem}[thm]{Remark}
\DeclareMathOperator{\sub}{Sub}
\newcommand*\diff{\mathop{}\!\mathrm{d}}
\begin{document}
	
	\title{On the Strong Restricted Isometry Property of Bernoulli Random Matrices}
	\author{Ran Lu}
	\thanks{Research supported in part by NSERC Canada under Grant
		RGP 228051. 
	}
	\address{Department of Mathematical and Statistical Sciences, University of Alberta, Edmonton, AB, Canada, T6G 2G1}
	\email{rlu3@ualberta.ca}

	\begin{abstract}
		The study of the restricted isometry property (RIP) of corrupted random matrices is particularly important in the field of compressed sensing (CS) with corruptions. If a matrix still satisfies the RIP after that a certain portion of rows are erased, then we say that this matrix has the strong restricted isometry property (SRIP). In the field of compressed sensing, random matrices which satisfy certain moment conditions are of particular interest. Among these matrices, those with entries generated from i.i.d. Gaussian or symmetric Bernoulli random variables are often typically considered. Recent studies have shown that matrices with entries generated from i.i.d. Gaussian random variables satisfy the SRIP under arbitrary erasure of rows with high probability. In this paper, we study the erasure robustness property of Bernoulli random matrices. Our main result shows that with overwhelming probability, the SRIP holds for Bernoulli random matrices. Moreover, our analysis leads to a robust version of the famous Johnson-Lindenstrauss lemma for Bernoulli random matrices.
	\end{abstract}

	\keywords{Compressed sensing, Bernoulli random matrices, Strong restricted isometry property, Johnson-Lindenstrauss lemma.}
	\subjclass[2010]{62G35, 42C15} 
	
	\maketitle

	\begin{spacing}{1}
		\begin{flushleft}
			
			\section{Introduction}
			
			\subsection{Background on Compressed Sensing}

			Consider the following problem: In the sensor network, we transmit a signal through independent channels and the center hub receives observations from each channel for further analysis. In practice, it is typical that some channels fail to send the correct measurements, and thus we can only obtain the corrupted data. To deal with problems like this, methods of reconstructing a signal from sampling observations are of great interest to researchers in the fields of engineering. In general, it is impossible to recover a signal if there is nothing known about the signal or the measurement. However, with prior knowledge of the signal, it is possible to perform the recovery with negligible or even zero error. Over the past years, researchers have been refreshing their understandings on the relevancy and practicability on the prior assumptions for the input signal and the measurement, and have developed several powerful tools to study the signal recovery problem.\\
			
			\vspace{0.2cm}
			
			In signal processing, \emph{compressed sensing} (CS) is a technique for signal recovery via solving certain linear systems. In compressed sensing \emph{without corruption}, the general acquisition setting is represented as $y=Ax$, where $x\in\mathbb{R}^m$ is a \emph{signal}, $A\in\mathbb{R}^{n\times m}$ is a \emph{sensing matrix} ( here $\mathbb{R}^{n\times m}$ denotes the set of all $n\times m$ real matrices), and $y\in\mathbb{R}^n$ is the \emph{measurement}. In general, it is extremely difficult to recover the signal $x$ if we know nothing about $x$ itself and the sensing matrix $A$. Around 2004, it was shown by Cand\'es, Romberg and Tao in \cite{sr}, and Donoho in \cite{csd}, that if a signal satisfies certain sparsity condition plus additional assumptions on the sensing matrix, then the reconstruction can be accomplished. The ideas established in those papers provided the foundations of CS.\\
			
			\vspace{0.2cm}
			
			A key concept introduced in CS is the \emph{restricted isometry property} (RIP), which gives a characterization of ``the almost norm preserving property" of a matrix. This property was first introduced by Cand\'es and Tao in \cite{dl}.
			
			\begin{defn}A matrix $A\in\mathbb{R}^{n\times m}$ satisfies the \emph{restricted isometry property} of order $s$ with $s\in \{1,\dots,m\}$, if there exists $\delta\in[0,1)$ such that
				$$(1-\delta)\|x\|_2^2\leq\|Ax\|_2^2\leq(1+\delta)\|x\|_2^2$$
				for all $x\in \mathbb{R}^m$ with $\|x\|_0\leq s$, where $\|x\|_0$ denotes the number of non-zero entries of $x$. Moreover we define the $s$-RIP constant of $A$ via
				$$\delta_{s,A}:=\inf\{\delta\in[0,1):(1-\delta)\|x\|_2^2\leq\|Ax\|_2^2\leq(1+\delta)\|x\|_2^2\text{ for all }x\in \mathbb{R}^m\text{ with }\|x\|_0\leq s \}.$$
			\end{defn}
			
			It was shown by Cand\'es and Tao in \cite{dl} that one can exactly recover a sparse signal $x$ with $\|x\|_0\leq s$, if the $s$-RIP constant of the sensing matrix $A$ satisfies certain condition. In practice, matrices which satisfy the RIP are usually generated from random variables. There are many types of random matrices which have the RIP with overwhelming probability. We list few examples below:
			 \begin{itemize}
			 	\item Matrices with entries drawn from i.i.d. Gaussian random variables.
			 	
			 	\item Matrices with entries drawn from i.i.d. symmetric Bernoulli random variables.
			 	
			 	\item Matrices with rows drawn randomly from a discrete Fourier transform matrix.
			 	
			 	\item Matrices with entries drawn from i.i.d. sub-gaussian or sub-exponential random variables.
			 \end{itemize} 
		 For more examples and details, see e.g. \cite{da,sprip,sr,ht,csrm,os}.\\
		 
		 \vspace{0.2cm}

			  One the other hand, the RIP is closely related to the well-known \emph{Johnson-Lindenstrauss(JL) lemma}, which was first stated and proved by Johnson and Lindenstrauss in \cite{jl} under an abstract setting. Here we provide a specific version of the JL lemma, which is the one we often use in CS:
			 \begin{lem}[\textbf{Johnson-Lindenstrauss lemma}](\cite[Lemma 4.1]{sprip})
			 	Let $\epsilon\in(0,1)$ and $N\in\mathbb{N}$. For every finite subset $Q$ of $\mathbb{R}^N$, let $k_Q\in\mathbb{N}$ be such that $k_Q>O(\epsilon^{-2}\ln(|Q|))$, where $|Q|$ denotes the cardinality of the set $Q$. Then there exists a Lipschitz function $f:\mathbb{R}^N\to\mathbb{R}^{k_Q}$ such that
			 	$$(1-\epsilon)\|u-v\|_{\ell^2(\mathbb{R}^N)}^2\leq\|f(u)-f(v)\|_{\ell^2(\mathbb{R}^{k_Q})}^2\leq (1+\epsilon)\|u-v\|_{\ell^2(\mathbb{R}^N)}^2,\quad\forall u,v\in Q.$$
			 	\end{lem}
			  The JL lemma says that a discrete set of points in a high dimensional Euclidean space can be embedded into a low dimensional space, in a way such that the distances between points are nearly preserved. From \cite[Theorem 5.2]{sprip}, one can see that the JL lemma implies the RIP.\\

			\subsection{Compressed Sensing with Corruptions}
			
			A natural generalization of CS is \emph{CS with corruptions}. In this case, some elements of the measurement are corrupted. We formulate the model in this setting as follows: Given a signal $x\in\mathbb{R}^m$ with certain sparsity condition and a sensing matrix $A\in\mathbb{R}^{n\times m}$. We only receive $A_Tx$ as the observation, where $T\subseteq\{1,\dots, n\}$ is unknown and $A_T$ denotes the sub-matrix by keeping the rows of $A$ with indices in $T$. In order to reconstruct the signal accurately and efficiently, it is important to verify whether $A_T$ satisfies the RIP. This leads to the concept of the \emph{strong restricted isometry property} (SRIP), which plays a central role in the study of the robustness of a matrix. 
			
			\begin{defn}\label{sripdef}A matrix $A\in\mathbb{R}^{n\times m}$ is said to satisfy the \emph{strong restricted isometry property} (SRIP) of order $s$ and level $[\theta, \omega,\beta]$ with $0<\theta\leq 1\leq\omega<2$ and $\beta\in[0,1)$ if
				$$\theta \|x\|_2^2\leq\|A_Tx\|_2^2\leq\omega\|x\|_2^2$$
				holds for all $x\in\mathbb{R}^m$ with $\|x\|_0\leq s$ and all $T\subseteq\{1,\dots,n\}\text{ with }|T^c|\leq\beta n.$
			\end{defn}

			The concept of SRIP was introduced in \cite{SRIP1}, and the authors used the terminology "democratic" to describe that all measurement elements are equally important. To be more specific, an $n\times m$ matrix $B$ is said to be $(\tilde{n},s,\delta)$-democratic with $\tilde{n}\in\{1,\dots,n\}$, $s\in\{1,\dots, m\}$ and $0<\delta<1$ if
			$$(1-\delta)\|x\|_2^2\leq \|A_Tx\|_2^2\leq(1+\delta)\|x\|_2^2,\quad \forall T\subseteq\{1,\dots,n\}\text{ with }|T|\geq\tilde{n},\quad\forall x\in\mathbb{R}^m\text{ with }\|x\|_0\leq s.$$ 
			The erasure robostness property for Gaussian random matrices was investigated in \cite{SRIP1}, and the authors made the comment that the same analysis can be performed on sub-gaussian random matrices. It's proved in \cite{SRIP1} that, an $n\times m$ Gaussian (sub-gaussian) random matrix is $(\tilde{n},s,\delta)$-democratic with high probability provided that
			\begin{equation}\label{const}n=C(s+n-\tilde{n})\ln\left(\frac{m+n}{s+n-\tilde{n}}\right),\end{equation}
			where $C>0$ is an absolute constant. One can see that the constraint \eqref{const} is slightly unnatural, comparing with the constraint $n=O(s\ln(m/s))$ as in traditional CS. Moreover, from the analysis performed in \cite{SRIP1}, it's unclear whether the proportion of rows which are allowed to be corrupted depends on the size of the sensing matrix or the dimension of the measurement vector. Later, the definition of the SRIP, which is Definition\autoref{sripdef}, was introduced in \cite{srip} This definition is more preferred because it sticks to the ratio rather than the number of rows being corrupted, which makes more sense for models with huge amount of data. It's proved in \cite{srip} that Gaussian random matrices have the SRIP of certain level with overwhelming probability. In \cite{rp}, further results about the robustness property of Gaussian random matrices were proved, including the robust version of the JL lemma for Gaussian random matrices.\\
			
			\vspace{0.2cm}
			In this paper, we concentrate our effort on Bernoulli random matrices, as they are used quite often as sensing matrices in CS (\cite{csber1, csber2, csber3, csber4}). A Bernoulli random matrix is defined as the following:
			
			\begin{defn}We say that a real random variable $X$ has the symmetric Bernoulli distribution, if $X$ only takes values $\pm 1$ with
				$$\mathbb{P}(X=1)=\mathbb{P}(X=-1)=\frac{1}{2}.$$
			\end{defn}
			
			A Bernoulli random matrix is a random matrix with entries drawn from i.i.d. symmetric Bernoulli random variables. It was proved that a Bernoulli random matrix is singular with positive probability (see \cite{tv}). The study on the singularity of a Bernoulli random matrix is closely related to its erasure robustness property. From the fact that a Bernoulli matrix is singular with positive probability, one may expect an upper bound on the portion of rows which can be erased, such that the corrupted Bernoulli random matrix will still satisfy the RIP with high probability. In fact, it is not difficult to show that the portion of rows erased cannot reach $\frac{1}{2}$. However, it is still unkonwn whether or not $\frac{1}{2}$ is the optimal upper bound for the erasure ratio.

			\subsection{Contributions and the Structure of the Paper}
		
		In this paper, we will prove that a Bernoulli random matrix satisfies the SRIP of certain order and level.	Our study relies on the fact that a Bernoulli random variable belongs to the sub-gaussian class, which will be discussed in the next section. This indeed makes our study to share certain ingredients with \cite{SRIP1}, but still several new elements are involved in this paper. The key ingredient to yield the main result in \cite{SRIP1} is the classical concentration of measure property for sub-gaussian random matrices, which also plays an important role in our study. However, with the classical concentration of measure property along, one may not expect to do better than \cite{SRIP1}, particularly one cannot deduce the result on the erasure ratio. Moreover, we have illustrated in the previous subsection that an unnatural constraint \eqref{const} has to be imposed in order to yield the main result in \cite{SRIP1}. Thus in order to do better, we need to use several new tools, including a Lipschitz type concentration inequality, some order statistical techniques, and classical results on Bernoulli random variables/matrices. It's also interesting to note that, our main theorem tells that an $n\times m$ Bernoulli random matrix $\Phi$ satisfies the SRIP with high probability of order $s$ and of certain level, where we only impose the constraint $n=O(s\ln(m/s))$, just as in traditional CS.
			
				\vspace{0.2cm}
			
			The structure of this paper is organized as follows: In section 2, a brief review on symmetric Bernoulli random variables will be given. We will provide some concentration inequalities and order statistics results, which are essential and will be used later. In section 3, the erasure robustness property of Bernoulli random matrices will be concretely treated. We will focus on the following question: if no more than $\beta n$  rows can be erased from an $n\times m$ Bernoulli random matrix $\Phi$ with some $\beta\in(0,1)$, how large can $\beta$ be so that the corrupted matrix still has the RIP with high probability? We will provide a lower bound on the erasure ratio $\beta$. As a direct consequence, we will establish the SRIP and a robust version of the JL lemma for Bernoulli random matrices. Finally, a summary and further discussion will be given in Section 4.\\

			\section{On Symmetric Bernoulli Random Variables}
			
			In this section, we provide some necessary background on Bernoulli random variables.
			
			\subsection{Sub-gaussian Random Variables}
			
			One of the most important properties of a Bernoulli random variable is that, it belongs to the sub-gaussian class (the definition will be provided later). Many of the auxiliary results on Bernoulli random variables are actually developed from the sub-gaussian property. Therefore, it's better for us to review some basics on sub-gaussian random variables. All definitions and results mentioned in this subsection are known. For further details related to sub-gaussian random variables, we refer the readers to \cite{sg}.
			
			\begin{defn}
				A real random variable $X$ is called \emph{$b$-sub-gaussian} for some $b>0$ if
				\begin{equation}\label{sub gauss}
				\mathbb{E}(e^{tX})\leq e^{\frac{b^2t^2}{2}},\quad\forall t\in\mathbb{R}.
				\end{equation}
				Notation: $X\sim\sub(b^2)$.
			\end{defn}
			
			With the above definition, it's easy to see that if $X$ is a symmetric Bernoulli random variable, then $X\sim \sub(1)$.\\
			
			\vspace{0.2cm}
			
			It is straight forward to verify the following properties of sub-gaussian variables:

			\begin{prop}\label{propsg}
				\begin{enumerate}[(i)]
					
					\item If $X\sim\sub(b^2)$, then $X$ is centred. That is, $\mathbb{E}(X)=0$.
					
					\item If $X\sim\sub(b^2)$, then $\var(X)\leq b^2$.
					
					\item If $X\sim\sub(b^2)$, then for every $c\in\mathbb{R}$, we have $c X\sim\sub(c^2b^2)$.
					
					\item If $X_1\sim\sub(b_1^2)$ and $X_2\sim\sub(b_2^2)$, then $X_1+X_2\sim\sub((b_1+b_2)^2)$. Moreover, if $X_1$ and $X_2$ are independent, then $X_1+X_2\sim\sub(b_1^2+b_2^2)$.

					\item If $X=(X_1,X_2,\dots,X_N)$ is a random vector such that $X_i$'s are i.i.d. random variables with $X_i\sim\sub(b^2)$ for all $i=1,\dots,N$. Then for every $\alpha\in\mathbb{R}^N$, we have
					$$\langle X,\mathbf{\alpha}\rangle\sim\sub(\Vert\alpha\Vert_2^2b^2).$$

				\end{enumerate}
			\end{prop}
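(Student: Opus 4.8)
The plan is to read off all five statements directly from the defining inequality \eqref{sub gauss}, using a Taylor expansion near $t=0$ for (i) and (ii) and multiplicativity of the MGF (together with Hölder's inequality in the dependent case) for (iii)--(v). For (i), since $\mathbb{E}(e^{tX})<\infty$ for all $t$, the MGF is smooth (in fact real-analytic) near the origin, so $\mathbb{E}(e^{tX})=1+t\,\mathbb{E}(X)+o(t)$ while $e^{b^2t^2/2}=1+o(t)$; subtracting gives $t\,\mathbb{E}(X)\le o(t)$, and dividing by $t>0$ and letting $t\downarrow0$ yields $\mathbb{E}(X)\le0$, while dividing by $t<0$ (which reverses the inequality) and letting $t\uparrow0$ yields $\mathbb{E}(X)\ge0$; hence $\mathbb{E}(X)=0$. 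For (ii) I would carry the expansion one order further: using $\mathbb{E}(X)=0$, $\mathbb{E}(e^{tX})=1+\tfrac{t^2}{2}\mathbb{E}(X^2)+o(t^2)$ and $e^{b^2t^2/2}=1+\tfrac{b^2t^2}{2}+o(t^2)$, so $\tfrac{t^2}{2}\mathbb{E}(X^2)\le\tfrac{b^2t^2}{2}+o(t^2)$; dividing by $t^2$ and letting $t\to0$ gives $\mathbb{E}(X^2)\le b^2$, and since $X$ is centred, $\var(X)=\mathbb{E}(X^2)\le b^2$.

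Item (iii) is immediate: $\mathbb{E}(e^{t(cX)})=\mathbb{E}(e^{(ct)X})\le e^{b^2(ct)^2/2}=e^{(c^2b^2)t^2/2}$ for every $t$, the case $c=0$ being trivial. In the independent case of (iv), independence factorises the MGF, $\mathbb{E}(e^{t(X_1+X_2)})=\mathbb{E}(e^{tX_1})\,\mathbb{E}(e^{tX_2})\le e^{b_1^2t^2/2}e^{b_2^2t^2/2}=e^{(b_1^2+b_2^2)t^2/2}$. For the general (possibly dependent) case I would apply Hölder's inequality with a conjugate pair $p,q$ (so $\tfrac1p+\tfrac1q=1$), getting $\mathbb{E}(e^{t(X_1+X_2)})\le\big(\mathbb{E}(e^{ptX_1})\big)^{1/p}\big(\mathbb{E}(e^{qtX_2})\big)^{1/q}\le\exp\!\big(\tfrac{t^2}{2}(pb_1^2+qb_2^2)\big)$, and then minimise $pb_1^2+qb_2^2$ over all such pairs; the minimiser is $p=(b_1+b_2)/b_1$, $q=(b_1+b_2)/b_2$, for which $pb_1^2+qb_2^2=(b_1+b_2)^2$, giving $X_1+X_2\sim\sub((b_1+b_2)^2)$. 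Finally, (v) follows by combining (iii) with the independent part of (iv): each $\alpha_iX_i\sim\sub(\alpha_i^2b^2)$, and since the $X_i$ are independent, $\langle X,\alpha\rangle=\sum_{i=1}^N\alpha_iX_i\sim\sub\big(\sum_{i=1}^N\alpha_i^2b^2\big)=\sub(\|\alpha\|_2^2b^2)$.

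All computations here are short, so there is no serious obstacle; the only points requiring a little care are the justification that the MGF may be differentiated (equivalently Taylor-expanded) at the origin in (i)--(ii), which follows from its finiteness on a neighbourhood of $0$ via dominated convergence, and the optimisation of the Hölder exponents in the dependent case of (iv). I would also note the mild abuse of notation in (iv): writing $X_1+X_2\sim\sub((b_1+b_2)^2)$ only asserts that \eqref{sub gauss} holds with that constant, the genuinely smallest admissible constant being no larger, so the convention that $b$ denotes the smallest such number is not contradicted.
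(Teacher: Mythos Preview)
Your proof is correct and complete. The paper does not actually prove this proposition---it merely asserts that the properties are ``straight forward to verify''---so your argument supplies precisely the standard details (Taylor expansion of the MGF for (i)--(ii), direct substitution for (iii), factorisation and H\"older with optimised exponents for (iv), and iteration for (v)) that the paper leaves to the reader.
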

			
			Sub-gaussian random variables can be characterized in several equivalent ways:

			\begin{thm}\label{csg}The following statements are equivalent for a real centred random variable $X$:
				
				\begin{enumerate}[(i)]
					
					\item $X\sim\sub(b^2)$ for some $b>0$.
					
					\item For all $\lambda>0$, there exists $ c>0$ such that $\mathbb{P}(|X|>\lambda)\leq 2e^{-c\lambda^2}$. In fact we may choose $c=\frac{1}{2b^2}$.
					
					\item For every $ \xi>1$, there exists $a>0$ depending on $\xi$ such that $\mathbb{E}(e^{aX^2})\leq \xi$. In fact, we can choose $a=\frac{\xi-1}{2b^2(\xi+1)}$.
					
				\end{enumerate}
			\end{thm}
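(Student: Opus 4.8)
The plan is to prove the chain of implications $(\mathrm{i})\Rightarrow(\mathrm{ii})\Rightarrow(\mathrm{iii})\Rightarrow(\mathrm{i})$, pinning down the explicit constants on the first two links (where the statement records sharp values) and producing merely \emph{some} admissible $b'>0$ on the last one. The tools involved are minimal: Markov's inequality, the layer--cake identity $\mathbb{E}(Y)=\int_0^\infty\mathbb{P}(Y>s)\,\diff s$ for $Y\ge0$, Jensen's inequality, and the elementary scalar bounds $uv\le\tfrac12(u^2+v^2)$ and $0\le e^u-1-u\le\tfrac{u^2}{2}e^{|u|}$ for real $u$ (the latter from term-by-term comparison of the power series).

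For $(\mathrm{i})\Rightarrow(\mathrm{ii})$ I would run the Chernoff argument: for $\lambda,t>0$,
\[
\mathbb{P}(X>\lambda)\le e^{-t\lambda}\,\mathbb{E}(e^{tX})\le e^{-t\lambda+b^2t^2/2},
\]
and optimizing over $t$ (the minimum is at $t=\lambda/b^2$) gives $\mathbb{P}(X>\lambda)\le e^{-\lambda^2/(2b^2)}$. Since $-X\sim\sub(b^2)$ as well, the same bound holds for $\mathbb{P}(-X>\lambda)$, and adding the two yields $\mathbb{P}(|X|>\lambda)\le 2e^{-\lambda^2/(2b^2)}$, i.e.\ (ii) with $c=\tfrac1{2b^2}$.

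For $(\mathrm{ii})\Rightarrow(\mathrm{iii})$, fix $\xi>1$, let $c$ be the constant from (ii), and for $0<a<c$ evaluate, via the layer--cake identity and the substitution $\lambda=\sqrt{(\ln s)/a}$,
\[
\mathbb{E}(e^{aX^2})=1+\int_1^\infty\mathbb{P}\!\left(|X|>\sqrt{\tfrac{\ln s}{a}}\right)\diff s\le 1+2\int_1^\infty s^{-c/a}\,\diff s=1+\frac{2a}{c-a},
\]
the integral converging precisely because $a<c$. Solving $1+\tfrac{2a}{c-a}\le\xi$ gives $a\le\frac{(\xi-1)c}{\xi+1}$, which is $<c$ since $\xi>1$, so such $a$ exists; substituting $c=\tfrac1{2b^2}$ recovers the stated value $a=\frac{\xi-1}{2b^2(\xi+1)}$.

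The last link $(\mathrm{iii})\Rightarrow(\mathrm{i})$ is where the real work lies: every crude estimate of $\mathbb{E}(e^{tX})$ leaves a multiplicative constant $>1$, whereas the target $e^{b'^2t^2/2}$ tends to $1$ as $t\to0$, so the hypothesis $\mathbb{E}(X)=0$ must be exploited. I would split on $|t|$. For $|t|\ge\sqrt a$ (with $a$ furnished by (iii) for, say, $\xi=2$), the bound $tX\le\tfrac{t^2}{2a}+\tfrac{a}{2}X^2$ gives, using Jensen on the concave square root, $\mathbb{E}(e^{tX})\le e^{t^2/(2a)}\,\mathbb{E}\big((e^{aX^2})^{1/2}\big)\le\sqrt\xi\,e^{t^2/(2a)}$, and here the Gaussian factor already absorbs $\sqrt\xi$ once $t^2\ge a$. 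For $|t|\le\sqrt a$ I would instead expand, using $\mathbb{E}(X)=0$ and $e^u-1-u\le\tfrac{u^2}{2}e^{|u|}$,
\[
\mathbb{E}(e^{tX})\le 1+\frac{t^2}{2}\,\mathbb{E}\!\left(X^2e^{|tX|}\right),
\]
and then bound $x^2e^{|t||x|}\le\tfrac{C}{a}e^{ax^2}$ uniformly for $|t|\le\sqrt a$ (from $|t||x|\le\tfrac{t^2}{2a}+\tfrac{a}{2}x^2$ together with $x^2\le\tfrac{2}{a}e^{ax^2/2}$), whence $\mathbb{E}(e^{tX})\le 1+\tfrac{C\xi}{2a}t^2\le e^{C\xi t^2/(2a)}$. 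Choosing $b'$ large enough to dominate both regimes simultaneously gives $\mathbb{E}(e^{tX})\le e^{b'^2t^2/2}$ for every $t\in\mathbb{R}$, which is (i). The small-$|t|$ Taylor estimate, which rests squarely on mean zero, is the one genuinely delicate point; everything else is routine bookkeeping of constants.
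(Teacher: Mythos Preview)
The paper does not actually prove this theorem: it is stated in the preliminaries (Section~2.1) as a known characterization with a reference to \cite{sg}, and no argument is supplied. Your proposal is a complete and correct proof following the standard route $(\mathrm{i})\Rightarrow(\mathrm{ii})\Rightarrow(\mathrm{iii})\Rightarrow(\mathrm{i})$; the Chernoff/layer--cake computations in the first two links recover exactly the constants $c=\tfrac{1}{2b^2}$ and $a=\tfrac{\xi-1}{2b^2(\xi+1)}$ recorded in the statement, and your small/large $|t|$ split for $(\mathrm{iii})\Rightarrow(\mathrm{i})$, using the centring hypothesis through the Taylor remainder $e^u-1-u\le\tfrac{u^2}{2}e^{|u|}$, is the right way to handle the regime $t\to0$ where a naked multiplicative constant would not be absorbed.
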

			
			\subsection{The Concentration of Measure Phenomena}
			The concentration of measure phenomena is extremely important in random matrix theory. It has been intensively studied over the past few years. For detailed discussions on this topic, we refer the readers to \cite{da,consg, lot, cmb,hdp,con, pbs} and references therein.\\

			\vspace{0.2cm}
			
			For us to establish the SRIP for Bernoulli random matrices, we need two concentration inequalities for sub-gaussian random variables. The first inequality is the classical concentration of measure inequality for sub-gaussian random matrices, which was proved in \cite{consg,cmb}:
			
			\begin{thm}[\textbf{Concentration of measure for sub-gaussian matrices}] \label{con} Suppose that $A\in\mathbb{R}^{n\times m}$ is a random matrix whose entries are drawn from i.i.d. $b$-sub-gaussian random variables for some $b>0$ and has variance $\frac{1}{n}$. Then for all $x\in\mathbb{R}^m$, we have:
				\begin{enumerate}[(i)]
					
					\item $\mathbb{E}(\| Ax\|_2^2)=\| x\|_2^2$.
					
					\item There exists $\kappa>0$ (depending on the sub-gaussian random variable used to generate $A$) such that for every $\epsilon\in(0,1)$, we have
					$$\mathbb{P}(\| Ax\|_2^2-\| x\|_2^2>\epsilon \| x\|_2^2)<\exp(-\kappa\epsilon^2n),$$
					$$\mathbb{P}(\| Ax\|_2^2-\| x\|_2^2<-\epsilon \| x\|_2^2)<\exp(-\kappa\epsilon^2n).$$
				\end{enumerate}
			\end{thm}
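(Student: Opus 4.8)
The plan is to reduce everything to a one–dimensional statement about a sum of i.i.d.\ squares of sub-gaussian variables. By homogeneity of both sides in $x$ I may assume $\|x\|_2=1$. Writing $A_1,\dots,A_n$ for the rows of $A$ and $Y_i:=\langle A_i,x\rangle$, the $Y_i$ are i.i.d.\ and $\|Ax\|_2^2=\sum_{i=1}^n Y_i^2$. Part (i) is then immediate: each $Y_i$ is centred, and by independence together with the vanishing means of the entries, $\mathbb{E}(Y_i^2)=\var(Y_i)=\sum_{j}x_j^2\,\var(A_{ij})=\tfrac1n\|x\|_2^2=\tfrac1n$, whence $\mathbb{E}(\|Ax\|_2^2)=1=\|x\|_2^2$.

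For part (ii) I would run a Chernoff bound on $\sum_i Y_i^2$. It is convenient to rescale: write $A_{ij}=n^{-1/2}\xi_{ij}$ with $\xi_{ij}$ i.i.d., mean zero, unit variance and $\xi_{ij}\sim\sub(\rho^2)$ for a distribution-dependent $\rho^2\ge1$ (which encodes the hypotheses that the entries are $\sub(b^2)$ and have variance $\tfrac1n$), so that $W_i:=\langle\xi_i,x\rangle=\sqrt n\,Y_i$ satisfies $W_i\sim\sub(\|x\|_2^2\rho^2)=\sub(\rho^2)$ by the linear-combination property of the preceding Proposition, with $\mathbb{E}(W_i^2)=1$ and $\|Ax\|_2^2=\tfrac1n\sum_{i=1}^n W_i^2$. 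For the upper tail, Markov's inequality gives, for $\lambda>0$,
$$\mathbb{P}\Big(\sum_{i=1}^n W_i^2 > n(1+\epsilon)\Big)\le e^{-\lambda n(1+\epsilon)}\big(\mathbb{E}\,e^{\lambda W_1^2}\big)^n.$$
The crux is a sharp bound on the exponential moment of a squared sub-gaussian. Expanding $\mathbb{E}(e^{\lambda W_1^2})=1+\lambda\,\mathbb{E}(W_1^2)+\sum_{k\ge2}\tfrac{\lambda^k}{k!}\mathbb{E}(W_1^{2k})$ and feeding in the moment estimate $\mathbb{E}(W_1^{2k})\le 2(2\rho^2)^k k!$ — obtained by integrating the tail bound $\mathbb{P}(|W_1|>t)\le 2e^{-t^2/(2\rho^2)}$ of Theorem~\ref{csg}(ii) — the series tail is $\le 16\rho^4\lambda^2$ whenever $2\rho^2\lambda\le\tfrac12$, so $\mathbb{E}(e^{\lambda W_1^2})\le\exp(\lambda+16\rho^4\lambda^2)$. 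Substituting and taking $\lambda=\epsilon/(32\rho^4)$ (admissible since $\rho^2\ge1$ and $\epsilon<1$) collapses the bound to $\exp(-\kappa\epsilon^2 n)$ with $\kappa=1/(64\rho^4)$. The lower tail is symmetric: for $\lambda>0$, $\mathbb{P}(\sum_i W_i^2<n(1-\epsilon))\le e^{\lambda n(1-\epsilon)}(\mathbb{E}\,e^{-\lambda W_1^2})^n$, and $e^{-u}\le 1-u+\tfrac{u^2}{2}$ for $u\ge0$ gives $\mathbb{E}(e^{-\lambda W_1^2})\le 1-\lambda+\tfrac{\lambda^2}{2}\mathbb{E}(W_1^4)\le\exp(-\lambda+8\rho^4\lambda^2)$, and the same optimization yields the matching bound; one then takes $\kappa$ to be the smaller of the two constants.

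The step I expect to be the real obstacle is the exponential-moment estimate for $W_1^2$. Mere finiteness of $\mathbb{E}(e^{aW_1^2})$ is already available from Theorem~\ref{csg}(iii), but that alone produces only an $e^{-c\epsilon n}$–type bound; obtaining the Gaussian rate $e^{-\kappa\epsilon^2 n}$ requires the refined expansion $\mathbb{E}(e^{\lambda W_1^2})=\exp\!\big(\lambda\,\mathbb{E}(W_1^2)+O(\lambda^2)\big)$ near $\lambda=0$ with the quadratic term controlled purely by the sub-gaussian parameter, which is exactly what forces the optimal $\lambda$ into the regime $\lambda\asymp\epsilon$. The accompanying bookkeeping point is to keep the normalization (variance $\tfrac1n$, hence $\rho^2$ independent of $n$) visible throughout, so that the resulting $\kappa$ is a genuine constant depending only on the distribution, not on $n$ or $m$.
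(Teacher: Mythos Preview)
The paper does not prove this theorem; it is quoted verbatim as \cite[Lemma~6.1]{cmb} and used as a black box (with the $\pm1$ case spelled out in the ensuing Remark via \cite{da}). So there is no in-paper proof to compare against.

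Your argument is the standard and correct one: reduce to $\|x\|_2=1$, pass to the i.i.d.\ unit-variance sub-gaussian variables $W_i=\sqrt{n}\langle A_i,x\rangle$, and run a Chernoff bound on $\sum_i W_i^2$ using the expansion $\mathbb{E}(e^{\lambda W_1^2})\le \exp(\lambda+C\rho^4\lambda^2)$ for small $\lambda$, which is exactly the Bernstein-type estimate behind \cite[Lemma~6.1]{cmb}. The moment bound $\mathbb{E}(W_1^{2k})\le 2(2\rho^2)^k k!$ from the sub-gaussian tail, the optimization $\lambda\asymp\epsilon/\rho^4$, and the lower-tail treatment via $e^{-u}\le 1-u+u^2/2$ are all fine and give $\kappa\asymp\rho^{-4}$.

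One bookkeeping point worth making explicit: the theorem's hypothesis ``entries $\sim\sub(b^2)$ with variance $1/n$'' really means $A_{ij}=n^{-1/2}\xi_{ij}$ for a fixed unit-variance sub-gaussian law, so that $\rho^2=nb^2$ is a genuine constant of that law; you state this conclusion at the end but the identity $\rho^2=nb^2$ should appear where you introduce $\rho$, since otherwise a reader might worry that $\rho^2$ scales with $n$. With that clarified, your proof is complete and matches what the cited reference does.
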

			
			\vspace{0.2cm}
			
			\begin{rem}\label{conber}As a special case of Theorem\autoref{con}, it was proved in \cite{da} that if $A\in\mathbb{R}^{n\times m}$ has entries drawn from i.i.d. standard normal or symmetric Bernoulli random variables, then we have
				$$\mathbb{P}\left(\frac{1}{n}\Vert Ax\Vert_2^2-\Vert x\Vert_2^2>\epsilon \Vert x\Vert_2^2\right)<\exp\left(-\left(\frac{\epsilon^2}{4}-\frac{\epsilon^3}{6}\right)n\right)$$
				and
				$$\mathbb{P}\left(\frac{1}{n}\Vert Ax\Vert_2^2-\Vert x\Vert_2^2<-\epsilon \Vert x\Vert_2^2\right)<\exp\left(-\left(\frac{\epsilon^2}{4}-\frac{\epsilon^3}{6}\right)n\right)$$
				for all $x\in\mathbb{R}^m$ and $\epsilon\in(0,1)$. Thus in this case we may take $\kappa=\frac{1}{12}$.
			\end{rem}

			\vspace{0.2cm}
			
			Another inequality we need is a Lipschitz type concentration inequality for sub-gaussian random variables. This inequality for Gaussian random variables was established in \cite[Proposition 2.3]{hdp}. With only a minor modification of the proof of the Gaussian case, we can extend the result to sub-gaussian random variables. For the sake of convenience, we provide the result and its proof here.
			
			\begin{thm}[\textbf{Lipschitz concentration inequality for sub-gaussian distributions}]\label{lcsg}Let $X=(X_1,\dots,X_d)$ has real i.i.d. entries with $X_i\sim\sub(b^2)$ for all $i=1,\dots,N$. Let $f:\mathbb{R}^d\to\mathbb{R}$ be a $1$-Lipschitz function. That is, $|f(x)-f(y)|\leq\|x-y\|_2$ for all $x,y\in\mathbb{R}^d$. Then for all $t>0$,  we have
			\begin{equation}\label{lcpos}\mathbb{P}(f(X)-\mathbb{E}(f(X))\geq t)\leq\exp\left(-\frac{t^2}{5b^2}\right),\end{equation}
			\begin{equation}\label{lcneg}\mathbb{P}(f(X)-\mathbb{E}(f(X))\leq -t)\leq\exp\left(-\frac{t^2}{5b^2}\right).\end{equation}
			\end{thm}
		
		\begin{proof}
			
	By Rademacher's theorem, a Lipschitz function is differentiable almost everywhere. So it suffices to prove the result for every differentiable function $f$, and the general case follows from a standard approximation argument. As $f$ has Lipschitz constant 1, we have $\Vert\nabla f\Vert_2\leq 1$. Without loss of generality, assume $\mathbb{E}(f(X))=0$. Let $X'=(X'_1,\dots,X'_d)$ be an independent copy of $X$. That is, $X'$ and $X$ have the same (joint) distribution and are indenepdent. Let $\gamma:[0,1]\to\mathbb{R}^d$ be a smooth path connecting $X$ and $X'$ with
	$$\gamma(t)=X'\cos \left(\frac{\pi}{2}t\right)+X\sin \left(\frac{\pi}{2}t\right),\quad\forall t\in[0,1].$$
	Then
	$$\gamma'(t)=\frac{\pi}{2}\left(-X'\sin \left(\frac{\pi}{2}t\right)+X\cos \left(\frac{\pi}{2}t\right)\right)=:\frac{\pi}{2}Y(t).$$
By item (v) of Proposition\autoref{propsg}, $Y$ has i.i.d. components with $Y_i\sim\sub(b^2)$ for all $i=1,\dots,d$. Moreover, by the fundamental theorem of line integral
	$$f(X)-f(X')=\frac{\pi}{2}\int_0^1\langle \nabla f(\gamma(t)), Y(t)\rangle\diff t.$$
	As $\|\nabla f\|_2\leq 1$, and $Y$ has i.i.d. $\sub(b^2)$ entries, it follows that 
	$$\langle \nabla f(\gamma(t)), Y(t)\rangle\sim\sub(b^2),\quad\forall t\in[0,1].$$ 
	Thus by Jensen's inequality, Fubini's theorem and item (iii) of Proposition\autoref{propsg}, we have
	$$\mathbb{E}(\exp(\lambda(f(X)-f(X'))))\leq\int_0^1\mathbb{E}(\exp(\frac{\pi}{2}\lambda\langle \nabla f(\gamma(t)), Y(t)\rangle))\diff t\leq \exp\left(\frac{b^2\lambda^2\pi^2}{8}\right)$$
	for all $\lambda\in\mathbb{R}$. As $X$ and $X'$ are i.i.d. copies, we have $\mathbb{E}(f(X))=\mathbb{E}(f(X'))=0$. Thus Jensen's inequality yields
	$$\mathbb{E}(\exp(\lambda f(X')))\geq\exp(\mathbb{E}(\lambda f(X')))=1,\quad\forall\lambda\in\mathbb{R}.$$

	Therefore
	$$\mathbb{E}(\exp(\lambda f(X)))\leq \mathbb{E}(\exp(\lambda(f(X)-f(X'))))\leq \exp\left(\frac{b^2\lambda^2\pi^2}{8}\right),\quad\forall \lambda\in\mathbb{R}.$$

	Thus for all $\lambda, t>0$,  we have
	$$\begin{aligned}\mathbb{P}(f(X)\geq t)&=\mathbb{P}(\exp(\lambda f(X))\geq e^{\lambda t})\leq \frac{\mathbb{E}(\exp(\lambda f(X)))}{e^{\lambda t}}\leq \exp\left(\frac{b^2\lambda^2\pi^2}{8}-\lambda t\right).
	\end{aligned}$$
	By setting $\lambda=\frac{4t}{b^2\pi^2}\left(1+\sqrt{1-\frac{\pi^2}{10}}\right)$, we have
	$$\mathbb{P}(f(X)\geq t)\leq\exp\left(-\frac{t^2}{5b^2}\right).$$
	This proves \eqref{lcpos}, and \eqref{lcneg} can be proved similarly. 
		\end{proof}

A direct consequence of Theorem\autoref{con} is the following corollary:

	\begin{cor}\label{key} Suppose that $y_1,\dots,y_n$ are i.i.d. random variables with $y_i\sim\sub(b^2)$ for $i=1,\dots,n$. Let $y_{(1)},\dots,y_{(n)}$ be the non-increasing rearrangements of $y_i$'s in magnitudes, i.e., $|y_{(1)}|\geq\dots\geq|y_{(n)}|$. Then
				$$\mathbb{E}\left(\sqrt{\frac{1}{k}\sum_{j=1}^ky_{(j)}^2}\right)\leq\sqrt{2eb^2\ln\frac{en}{k}},\quad \forall k=1,\dots,n.$$
			\end{cor}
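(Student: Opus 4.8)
The plan is to bound the expectation $\mathbb{E}\big(\sqrt{\frac{1}{k}\sum_{j=1}^k y_{(j)}^2}\big)$ by estimating the tail of the underlying random variables and integrating. First I would use Jensen's inequality to pass to the second moment: it suffices to control $\mathbb{E}\big(\frac{1}{k}\sum_{j=1}^k y_{(j)}^2\big)$, since $\mathbb{E}\sqrt{Z}\leq\sqrt{\mathbb{E}Z}$. The quantity $\sum_{j=1}^k y_{(j)}^2$ is the sum of the $k$ largest values among $y_1^2,\dots,y_n^2$, and a convenient way to handle it is the layer-cake / level-set identity
\begin{equation*}
\sum_{j=1}^k y_{(j)}^2 \leq k u + \sum_{i=1}^n \big(y_i^2 - u\big)_+
\end{equation*}
valid for every threshold $u>0$ (the $k$ largest squares each exceed at most $u$ plus their excess over $u$, and the remaining excesses only add to the right side). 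Taking expectations gives $\mathbb{E}\big(\frac1k\sum_{j=1}^k y_{(j)}^2\big)\leq u + \frac{n}{k}\,\mathbb{E}(y_1^2-u)_+$.

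Next I would estimate $\mathbb{E}(y_1^2-u)_+ = \int_u^\infty \mathbb{P}(y_1^2>s)\,ds$. Since $y_1\sim\sub(b^2)$, Theorem \ref{csg}(ii) with $c=\frac{1}{2b^2}$ gives $\mathbb{P}(y_1^2>s)=\mathbb{P}(|y_1|>\sqrt{s})\leq 2e^{-s/(2b^2)}$, so
\begin{equation*}
\mathbb{E}(y_1^2-u)_+ \leq \int_u^\infty 2e^{-s/(2b^2)}\,ds = 4b^2 e^{-u/(2b^2)}.
\end{equation*}
Therefore $\mathbb{E}\big(\frac1k\sum_{j=1}^k y_{(j)}^2\big)\leq u + \frac{4b^2 n}{k} e^{-u/(2b^2)}$. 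Now I optimize over $u$: choosing $u = 2b^2\ln\frac{en}{k}$ (so that $e^{-u/(2b^2)}=\frac{k}{en}$) makes the second term equal to $\frac{4b^2 n}{k}\cdot\frac{k}{en}=\frac{4b^2}{e}$, giving the bound $2b^2\ln\frac{en}{k} + \frac{4b^2}{e}$. Taking square roots, $\mathbb{E}(F_{\{1,\dots,k\}}(y)/\sqrt k)\leq\sqrt{2b^2\ln\frac{en}{k}+\frac{4b^2}{e}}$, which I would then absorb into the cleaner form $\sqrt{2eb^2\ln\frac{en}{k}}$ by noting $\ln\frac{en}{k}\geq 1$ (since $k\leq n$), hence $\frac{4b^2}{e}\leq \frac{4}{e}\cdot\frac{b^2}{1}\ln\frac{en}{k}$ and $2+\frac4e\leq 2e$ — a routine numerical check.

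The main obstacle is really just the bookkeeping in the final constant: one must make sure the additive $\frac{4b^2}{e}$ term can be folded into the logarithmic term using only $k\leq n$, and that the constant $2e$ genuinely dominates. An alternative, slightly slicker route avoids the optimization over $u$ entirely by instead optimizing the choice of threshold inside the tail integral directly, or by using the sub-exponential tail of $y_i^2$ together with a union bound over the $\binom{n}{k}$ choices of which indices achieve the top $k$ — but that introduces a $\ln\binom nk$ factor which is exactly of order $k\ln\frac{en}{k}$ by Stirling, recovering the same estimate; I would prefer the threshold argument above since it keeps the constants transparent and requires no combinatorial estimate. Either way, the sub-gaussian hypothesis enters only through the single tail bound of Theorem \ref{csg}(ii), so no further properties of $\sub(b^2)$ are needed.
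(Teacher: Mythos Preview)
Your proof is correct but follows a genuinely different route from the paper's. The paper argues via exponential moments: with $t=\frac{\xi-1}{2b^2(\xi+1)}$ it chains Jensen and convexity to obtain
\[
\exp\Bigl(\mathbb{E}\tfrac{t}{k}\textstyle\sum_{j=1}^k y_{(j)}^2\Bigr)\leq \tfrac{1}{k}\textstyle\sum_{j=1}^k\mathbb{E}\,e^{ty_{(j)}^2}\leq \tfrac{1}{k}\textstyle\sum_{j=1}^n\mathbb{E}\,e^{ty_j^2}\leq \xi\,\tfrac{n}{k},
\]
where the middle step extends from $k$ to $n$ terms and undoes the ordering, and the last step invokes the moment-generating characterization Theorem~\ref{csg}(iii). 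Taking logarithms, applying Jensen once more for the square root, and setting $\xi=e$ gives the stated bound directly with no leftover constants.

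Your argument instead uses the threshold inequality $\sum_{j\leq k} y_{(j)}^2\leq ku+\sum_i(y_i^2-u)_+$, then the tail characterization Theorem~\ref{csg}(ii) to integrate $\mathbb{E}(y_1^2-u)_+$, and finally optimizes $u$. Both approaches exploit sub-gaussianity through one of the equivalent conditions in Theorem~\ref{csg} and both reach the constant $2eb^2$. The paper's exponential-moment route is a bit slicker in that the constant falls out exactly after the substitution $\xi=e$, with no numerical cleanup; your threshold route is more hands-on, keeps the arithmetic explicit, and has the minor advantage that it never needs the intermediate convexity steps (two applications of Jensen plus convexity of $\exp$) that the paper uses. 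The small absorption step $2+\tfrac{4}{e}\leq 2e$ at the end of your argument is valid since $\ln\frac{en}{k}\geq 1$, so nothing is lost.
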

			\begin{proof}Let $S\subseteq\{1,\dots,n\}$. Define
				$$F_S:\mathbb{R}^n\to\mathbb{R},\quad y\mapsto \sqrt{\sum_{j\in S}y_{(j)}^2}.$$
			Note that for $x,y\in\mathbb{R}^n$:
			$$\begin{aligned}|F_S(x)-F_S(y)|^2&=\sum_{j\in S}(x_{(j)}^2+y_{(j)}^2)-2\sqrt{\left(\sum_{j\in S}x_{(j)}^2\right)\left(\sum_{j\in S}y_{(j)}^2\right)}\\
			&\leq\sum_{j\in S}(x_{(j)}^2+y_{(j)}^2)-2\sum_{j\in S}|x_{(j)}y_{(j)}|\\
			&=\sum_{j\in S}(|x_{(j)}|-|y_{(j)}|)^2\\
			&\leq\sum_{j=1}^n(|x_{(j)}|-|y_{(j)}|)^2\\
			&=\sum_{j=1}^n(x_{(j)}^2+y_{(j)}^2)-2\sum_{j=1}^n|x_{(j)}y_{(j)}|\\
			&\leq\sum_{j=1}^n(x_j^2+y_j^2)-2\sum_{j=1}^n|x_jy_j|\quad(\text{by the rearrangement inequality})\\
			&\leq \|x-y\|_2^2.
			\end{aligned}$$
		Thus $F_S$ is $1$-Lipschitz. For all $t>0$, it follows from Theorem\autoref{con} that
				$$\mathbb{P}\left(\sqrt{\frac{1}{|S|}\sum_{j\in S}y_{(j)}^2}\geq t+\mathbb{E}\sqrt{\frac{1}{|S|}\sum_{j\in S}y_{(j)}^2}\right)\leq \exp\left(-\frac{t^2|S|}{5b^2}\right).$$

				Let $\xi>1$ and choose $t=\frac{\xi-1}{2b^2(\xi+1)}$. It follows that:
				$$\begin{aligned}\exp\left(\mathbb{E}\left(\frac{1}{k}\sum_{j=1}^kty_{(j)}^2\right)\right)&=\exp\left(\frac{1}{k}\sum_{j=1}^k\mathbb{E}(ty_{(j)}^2)\right)\leq \frac{1}{k}\sum_{j=1}^k\exp(\mathbb{E}(ty_{(j)}^2))\\
				&\leq\frac{1}{k}\sum_{j=1}^k\mathbb{E}(\exp(ty_{(j)}^2))\leq \frac{1}{k}\sum_{j=1}^n\mathbb{E}(\exp(ty_{(j)}^2))\\
				&=\frac{1}{k}\mathbb{E}\left(\sum_{j=1}^n\exp(ty_{j}^2)\right)=\frac{1}{k}\mathbb{E}\left(\sum_{j=1}^n\exp(ty_{j}^2)\right)\\
				&=\frac{1}{k}\sum_{j=1}^n\mathbb{E}(\exp(ty_{j}^2))\leq\frac{1}{k}\sum_{j=1}^n\xi \quad (\text{by (iii) of Theorem\autoref{csg}})\\
				&=\xi\frac{n}{k}.
				\end{aligned}$$
				By taking logarithms on both sides and using Jensen's inequality, we have
				$$ \mathbb{E}\sqrt{\frac{1}{k}\sum_{j=1}^ky_{(j)}^2}\leq \sqrt{\frac{2b^2(\xi+1)}{\xi-1}\ln\frac{\xi n}{k}}.$$
				Setting $\xi=e$ yields the result.\end{proof}
			
			\subsection{Linear Combinations of Symmetric Bernoulli Random Variables}
			
		To study the SRIP of Bernoulli random matrices, it is inevitable to encounter linear combinations of symmetric Bernoulli random variables. We first recall the famous Khinchine's inequality:
		
			\begin{thm}[\textbf{Khinchine's inequality}]Let $X_1,\dots,X_m$ i.i.d. symmetric Bernoulli random variables. Then for every $p\in(0,\infty)$, there exist $a(p),b(p)>0$ depending only on $p$ so that
				$$a(p)\|c\|_2\leq\left(\mathbb{E}\left|\sum_{j=1}^mc_jX_j\right|^p\right)^{\frac{1}{p}}\leq b(p)\|c\|_2,\quad\forall c=(c_1,\dots,c_m)\in\mathbb{C}^m.$$
				The optimal $a(p)$ and $b(p)$ are called the Khinchine's constants.
			\end{thm}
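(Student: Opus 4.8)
The plan is to prove both inequalities by homogeneity together with a split at the exponent $p=2$, where everything is explicit, and then to push outward with a single interpolation step. Since both sides are homogeneous in $c$, I would normalise $\|c\|_2=1$ and set $S=\sum_{j=1}^m c_jX_j$. The one computation used throughout is that, because the $X_j$ are independent, centred and of variance $1$,
$$\mathbb{E}|S|^2=\sum_{j,k}c_j\overline{c_k}\,\mathbb{E}(X_jX_k)=\sum_{j=1}^m|c_j|^2=1,$$
and this identity is valid for complex $c$. Lyapunov's inequality (monotonicity of $L^q$-norms on a probability space) then immediately gives the cheap half of each bound: $\|S\|_{L^p}\le\|S\|_{L^2}=1$ for $0<p\le2$, so $b(p)=1$ there, and $\|S\|_{L^p}\ge\|S\|_{L^2}=1$ for $p\ge2$, so $a(p)=1$ there. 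What remains is an upper constant for $p>2$ and a lower constant for $p<2$.

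For the upper bound when $p>2$ I would use the sub-gaussian machinery already in place: a $\pm1$ variable is $\sub(1)$, so by item (v) of the Proposition on sub-gaussian variables $S\sim\sub(\|c\|_2^2)=\sub(1)$ (for complex $c$, apply this to $\re S$ and $\im S$ and recombine). Then \autoref{csg}(ii) furnishes the Gaussian tail $\mathbb{P}(|S|>\lambda)\le 2e^{-\lambda^2/2}$, and integrating it,
$$\mathbb{E}|S|^p=\int_0^\infty p\lambda^{p-1}\,\mathbb{P}(|S|>\lambda)\,\diff\lambda\le 2p\int_0^\infty\lambda^{p-1}e^{-\lambda^2/2}\,\diff\lambda=p\,2^{p/2}\,\Gamma\!\left(\tfrac{p}{2}\right),$$
so one may take $b(p)=\bigl(p\,2^{p/2}\Gamma(p/2)\bigr)^{1/p}$, which depends only on $p$. (If one prefers to stay elementary, the even-integer moments can be bounded directly: expanding the multinomial and using $(2a)!\ge 2^a a!$ gives $\mathbb{E}S^{2k}\le\tfrac{(2k)!}{2^k k!}\|c\|_2^{2k}$, and then Lyapunov's inequality covers all $p\ge2$.)

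The genuinely non-routine step is the lower bound for $0<p<2$, where monotonicity runs the wrong way. Here I would use log-convexity of $q\mapsto\log\|S\|_{L^q}$: fixing the auxiliary exponent $q=4$ and writing $2=\lambda p+(1-\lambda)\cdot4$ with $\lambda=\tfrac{2}{4-p}\in(\tfrac12,1)$, Hölder's inequality gives
$$1=\mathbb{E}|S|^2\le\bigl(\mathbb{E}|S|^p\bigr)^{\lambda}\bigl(\mathbb{E}|S|^4\bigr)^{1-\lambda}.$$
A direct fourth-moment computation in the real case yields $\mathbb{E}S^4=3\|c\|_2^4-2\sum_j c_j^4\le 3$, whence $\mathbb{E}|S|^p\ge 3^{\,1-1/\lambda}=3^{\,p/2-1}$, i.e. $(\mathbb{E}|S|^p)^{1/p}\ge 3^{\,1/2-1/p}=:a(p)$, again depending only on $p$. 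For complex $c$ one writes $c_j=a_j+ib_j$, notes $|S|^2=(\sum_j a_jX_j)^2+(\sum_j b_jX_j)^2$, and deduces the estimates from their real counterparts applied to the two coordinate sums together with the (quasi-)triangle inequality for $L^p$ and convexity of $t\mapsto t^{p/2}$, which changes the constants only by $p$-dependent factors. I expect the main obstacle to be precisely this $p<2$ lower bound: it is the only part not obtainable from monotonicity of norms, and extracting it requires coupling a moment estimate at an exponent strictly above $2$ (here $q=4$) with the log-convexity/interpolation inequality; checking that the resulting constant depends on $p$ alone, and handling the complex-coefficient reduction cleanly, are the remaining bookkeeping points.
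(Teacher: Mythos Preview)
The paper does not prove Khinchine's inequality at all: it is quoted as a classical result and immediately followed by Haagerup's theorem on the optimal constants, both without argument. So there is no ``paper's own proof'' to compare against.

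Your proposal is a correct and standard proof. The normalisation plus the identity $\mathbb{E}|S|^2=\|c\|_2^2$ gives the trivial halves via monotonicity of $L^p$-norms; the sub-gaussian tail integration (or the direct even-moment expansion) handles $b(p)$ for $p>2$; and the H\"older interpolation between $L^p$ and $L^4$, using $\mathbb{E}S^4\le 3\|c\|_2^4$, yields $a(p)=3^{1/2-1/p}$ for $0<p<2$. The only place to be a touch more careful is the complex-coefficient reduction for the lower bound: writing $S=U+iV$ with $U,V$ real Rademacher sums, you need the fourth-moment bound for $|S|^2=U^2+V^2$, which follows from the real case by expanding $\mathbb{E}(U^2+V^2)^2$ and Cauchy--Schwarz (or simply by noting $\mathbb{E}|S|^4\le 2(\mathbb{E}U^4+\mathbb{E}V^4)\le 6\|c\|_2^4$, which changes only the value of $a(p)$, not its $p$-only dependence). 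With that bookkeeping filled in, the argument is complete.
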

			The exact values of the Khinchine's constants were given by Haagerup in \cite{kh} as follows:
			\begin{thm}The optimal $a(p)$ and $b(p)$ in Khinchine's inequality are given by
				$$b(p)=\begin{cases}1, &0<p\leq 2,\\
				\sqrt{2}\left(\frac{\Gamma\left(\frac{p+1}{2}\right)}{\sqrt{\pi}}\right)^{\frac{1}{p}}, &p>2,\end{cases}$$
				and
				$$a(p)=\begin{cases}2^{\frac{1}{2}-\frac{1}{p}}, &0<p\leq p_0,\\
				\sqrt{2}\left(\frac{\Gamma\left(\frac{p+1}{2}\right)}{\sqrt{\pi}}\right)^{\frac{1}{p}}, &p_0<p<2,\\
				1, &p\geq 2,\end{cases}$$
				where $p_0\in(0,2)$ satisfies $\Gamma\left(\frac{p_0+1}{2}\right)=\frac{\sqrt{\pi}}{2}$.
			\end{thm}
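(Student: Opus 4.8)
The statement is Haagerup's determination of the optimal Khinchine constants, and the natural plan is to treat separately the four regimes cut out by $p=2$ and by the critical exponent $p_{0}$, in each case pairing a universal inequality with an extremal or limiting configuration that attains it. Throughout, normalise $c$ so that $\|c\|_{2}=1$ and write $S=\sum_{j}c_{j}X_{j}$; since the $X_{j}$ are independent, symmetric and of unit variance, $\mathbb{E}(S^{2})=\|c\|_{2}^{2}=1$. The "trivial" halves are the regimes handled by power-mean monotonicity: for $0<p\le 2$, Jensen's inequality applied to the concave map $u\mapsto u^{p/2}$ gives $(\mathbb{E}|S|^{p})^{1/p}\le(\mathbb{E}|S|^{2})^{1/2}=1$, with equality when $m=1$, so $b(p)=1$ on $(0,2]$; for $p\ge 2$ the convexity of $u\mapsto u^{p/2}$ reverses this, giving $(\mathbb{E}|S|^{p})^{1/p}\ge 1$ with equality again at $m=1$, so $a(p)=1$ on $[2,\infty)$. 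The content of the theorem is the two remaining constants.

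\textbf{The upper constant $b(p)$ for $p>2$.} I would first identify the limiting value along the equal-weight vectors $c^{(m)}=(m^{-1/2},\dots,m^{-1/2})$: by the central limit theorem $S\to g\sim N(0,1)$ in distribution, and because $S\sim\sub(\|c\|_{2}^{2})=\sub(1)$ uniformly in $m$ (by the properties of sub-gaussian variables and the tail bound of Theorem~\ref{csg}), the family $\{|S|^{p}\}$ is uniformly integrable, so $\mathbb{E}|S|^{p}\to\mathbb{E}|g|^{p}=2^{p/2}\Gamma(\tfrac{p+1}{2})/\sqrt{\pi}$; taking $p$-th roots produces the candidate $b(p)=\sqrt{2}(\Gamma(\tfrac{p+1}{2})/\sqrt{\pi})^{1/p}$. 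The remaining point — that this is actually an upper bound for every finite configuration $c$ — is obtained by a comparison/convexity argument of Haagerup's showing that among unit vectors the $p$-th moment of $S$ is largest in the "most uniform" limit, equivalently that the Bernoulli sum is dominated in $L^{p}$ by the corresponding Gaussian sum $\sum_{j}c_{j}g_{j}\sim N(0,1)$. This direction is already somewhat delicate, though not the crux.

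\textbf{The lower constant $a(p)$ for $0<p<2$: the main obstacle.} This is the deep part. The starting point is the representation, valid for $0<p<2$,
\[
|x|^{p}=C_{p}\int_{0}^{\infty}\frac{1-\cos(xt)}{t^{1+p}}\,dt,\qquad C_{p}=\Bigl(\int_{0}^{\infty}\frac{1-\cos s}{s^{1+p}}\,ds\Bigr)^{-1},
\]
which, evaluated at $x=S$ and combined with independence of the $X_{j}$ to factor the expectation of the product, yields
\[
\mathbb{E}|S|^{p}=C_{p}\int_{0}^{\infty}\frac{1-\prod_{j=1}^{m}\cos(c_{j}t)}{t^{1+p}}\,dt .
\]
The problem is now to minimise the right-hand side over all $m$ and all $c$ with $\|c\|_{2}=1$. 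Two configurations furnish the candidate minima: the vector $(2^{-1/2},2^{-1/2})$, for which $S$ equals $\pm\sqrt{2}$ each with probability $\tfrac14$ and $0$ with probability $\tfrac12$, so that $\mathbb{E}|S|^{p}=2^{p/2-1}$ and hence $a(p)=2^{1/2-1/p}$; and the Gaussian limit above, for which $\mathbb{E}|S|^{p}\to 2^{p/2}\Gamma(\tfrac{p+1}{2})/\sqrt{\pi}$. Equating them, $2^{p/2-1}=2^{p/2}\Gamma(\tfrac{p+1}{2})/\sqrt{\pi}$ simplifies to $\Gamma(\tfrac{p+1}{2})=\sqrt{\pi}/2$, which defines $p_{0}\in(0,2)$ and dictates which candidate is smaller on each side. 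The hard work is to prove that no configuration beats the smaller of these two values: one reduces to the case where $\|c\|_{\infty}$ is not too close to $1$ (a single dominant coefficient already makes $\mathbb{E}|S|^{p}$ large), and then carries out a careful analysis of the integral — splitting the $t$-range, exploiting the log-concavity of $\cos$ on $(-\tfrac{\pi}{2},\tfrac{\pi}{2})$ (which makes equal-weight vectors heuristically extremal) while controlling the contribution of large $t$ where the cosines change sign, and inducting on $m$ — to force the extremum onto the two equal-weight candidates $c^{(2)}$ and $c^{(\infty)}$. I expect this analytic optimisation, rather than any of the limiting computations, to be the principal difficulty; once it is in hand, checking continuity of $a(p)$, uniqueness of $p_{0}$ in $(0,2)$, and that $2^{1/2-1/p}$ is the smaller candidate precisely for $p\le p_{0}$, is routine.
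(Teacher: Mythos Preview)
The paper does not prove this theorem at all: it is simply quoted from Haagerup's paper \cite{kh} as background material (``The best Khinchine constants are given by Haagerup in \cite{kh} as follows''), and no argument is supplied. So there is nothing in the paper to compare your proposal against.

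That said, your outline is a faithful sketch of Haagerup's original argument. The trivial halves via power-mean monotonicity, the identification of the Gaussian limit as the upper bound for $p>2$, the integral representation $|x|^{p}=C_{p}\int_{0}^{\infty}t^{-1-p}(1-\cos(xt))\,dt$ for $0<p<2$, and the two competing extremal configurations $(2^{-1/2},2^{-1/2})$ and the equal-weight limit are exactly the ingredients Haagerup uses. You are also right that the analytic optimisation of the integral over unit vectors $c$ is the crux and by far the most delicate step; Haagerup's treatment of it is lengthy and involves several auxiliary inequalities that you have not attempted to reproduce. If you were actually asked to supply a proof here, you would need to either fill in that part in detail or, more appropriately for a paper that merely uses the result, do what the author does and cite \cite{kh}.
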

			
			With Khinchine's inequality and constants, and use the idea as in the proof of \cite[Lemma 2.2]{us}, we can prove the following key lemma for Bernoulli random matrices.
			
			\begin{lem}\label{khb}Let $\Phi\in\mathbb{R}^{n\times m}$ be a Bernoulli random matrix. Then
				$$\mathbb{P}\left(\|\Phi x\|_2^2\leq\left(\frac{1}{2}-q\right)n\right)<\exp\left(-\frac{qn}{3}\right),\quad\forall x\in\mathbb{S}^{m-1},q\in\left(0,\frac{1}{2}\right).$$
			\end{lem}

		\begin{proof}Let $x_0\in \mathbb{S}^{m-1}$ and put $Y_0:=\|\Phi x_0\|_2^2$.	For every $\mu\in\mathbb{R}$, define 
			$$F(\mu)=\ln[\mathbb{E}\exp(-\mu Y_0)].$$ 
			By Markov's inequality, we have
			$$\begin{aligned}
			\mathbb{P}(-\mu Y_0\geq F(\mu)+\nu)&=\mathbb{P}(\exp(-\mu Y_0-F(\mu))\geq \exp(\nu))\\
			&\leq\frac{\mathbb{E}(\exp(-\mu Y_0-F(\mu))}{\exp(\nu)}\\
			&=\exp(-\nu)
			\end{aligned}$$
			for all $\nu\in\mathbb{R}$. Recall that 
			$$1-t\leq\exp(-t)\leq 1-t+\frac{t^2}{2},\quad\forall t\geq 0.$$
			Denote $\phi_{ij}$ the entry of $\Phi$ in the $i-$th row and $j-$th colume. It follows that
			$$\begin{aligned}
			\mathbb{E}\exp(-\mu Y_0)&=\prod_{i=1}^n\mathbb{E}\left[\exp\left(-\mu\left|\sum_{j=1}^m (x_0)_j\phi_{ij}\right|^2\right)\right]\\
			&\leq \prod_{i=1}^n\left[1-\mu\mathbb{E}\left|\sum_{j=1}^m (x_0)_j\phi_{ij}\right|^2+\frac{\mu^2}{2}\mathbb{E}\left|\sum_{i=1}^m (x_0)_j\phi_{ij}\right|^4\right]\\
			&\leq \prod_{i=1}^n\left[1-\mu+\frac{\mu^2}{2}b(4)^4\right]\\
			&\leq\exp\left(-n\mu+\frac{3}{2}n\mu^2\right),
			\end{aligned}$$
			whenever $\mu-\frac{3}{2}\mu^2\geq 0$, where we have used the exact value of the Khinchine's constant 
			$$b(4)=\sqrt{2}\left(\frac{\Gamma(5/2)}{\sqrt{\pi}}\right)^{\frac{1}{4}}=\sqrt{2}\left(\frac{3}{4}\right)^{\frac{1}{4}}.$$
			Therefore
			$$F(\mu)\leq -n\mu+\frac{3}{2}n\mu^2.$$
			It follows that
			$$\mathbb{P}\left(-\mu Y_0\geq -n\mu+\frac{3}{2}n\mu^2+\nu\right)\leq\exp(-\nu),\quad$$
			whenever $\mu-\frac{3}{2}\mu^2\geq 0$ and $\nu>0$. By setting $\mu=\frac{1}{3}$ and $\nu=\frac{qn}{3}$, the proof is complete.\end{proof}
			
			\section{Bernoulli Random Matrices under Arbitrary Erasure of Rows with a Given Portion of Corruption}

			In this section, we will study the erasure robustness property of Bernoulli random matrices. We will establish the SRIP and the robust version of Johnson-Lindenstrauss lemma for Bernoulli random matrices. Throughout this section:
			
			\begin{itemize}
				
				\item  $\Phi\in\mathbb{R}^{n\times m}$ is a Bernoulli random matrix. As we are studying the erasure robusness property of the matrix, we should assume that our matrix has at least two rows ($n\geq 2$). In applications, usually the dimensions of matrices are huge.

				\item For $T\subseteq\{1,\dots n\}$, $\Phi_T$ denotes the sub-matrix of $\Phi$ by keeping rows with indices in $T$.
				
				\item For i.i.d. real random variables $y_1,\dots,y_n$, we denote $y_{(1)},\dots,y_{(n)}$ the non-increasing rearrangements of $y_i$'s in magnitude. That is, $|y_{(1)}|\geq\dots\geq|y_{(n)}|$.

			\end{itemize}
			
			We introduce the following notations:
			
			\begin{itemize}
			
		\item Fix $x\in\mathbb{S}^{m-1}$. For $\beta\in[0,1)$ and $ 0\leq\theta\leq\omega\leq\infty$, define the following events:
			$$\Omega_{[\theta,\omega],\beta}=\left\{\frac{1}{|T|}\|\Phi_Tx\|_2^2\in[\theta,\omega],\text{ for all } T\subseteq\{1,\dots,n\}\text{ with }|T^c|\leq\beta n\right\},$$
			$$\tilde{\Omega}_{[\theta,\omega],\beta}=\left\{\frac{1}{n}\|\Phi_Tx\|_2^2\in[\theta,\omega],\text{ for all } T\subseteq\{1,\dots,n\}\text{ with }|T^c|\leq\beta n\right\}.$$

			\item For $\beta\in[0,1]$, define
			$$T_\beta=\{T\subseteq\{1,\dots,n\}: |T^c|=\lfloor\beta n\rfloor\}.$$
			
		\item Fix $x\in\mathbb{S}^{m-1}$. For $\beta\in(0,1)$ and $\alpha>0$, define
			$$\theta_\beta(\alpha)=\sup\{\theta\in[0,\infty]: \mathbb{P}(\Omega_{[\theta,\infty],\beta})\geq 1-e^{-\alpha n},\forall n\geq 2\},$$
			$$\omega_\beta(\alpha)=\inf\{\omega\in[0,\infty]: \mathbb{P}(\Omega_{[0,\omega],\beta})\geq 1-e^{-\alpha n},\forall n\geq 2\},$$
			$$\tilde{\theta}_\beta(\alpha)=\sup\{\theta\in[0,\infty]: \mathbb{P}(\tilde{\Omega}_{[\theta,\infty],\beta})\geq 1-e^{-\alpha n},\forall n\geq 2\},$$
			$$\tilde{\omega}_\beta(\alpha)=\inf\{\omega\in[0,\infty]: \mathbb{P}(\tilde{\Omega}_{[0,\omega],\beta})\geq 1-e^{-\alpha n},\forall n\geq 2\}.$$
		\end{itemize}

			\subsection{Auxiliary Results}
			
			The following simple observation was proved in \cite{rp}.
			
			\begin{lem}\label{ratio}Fix $x\in\mathbb{S}^{m-1}$. For $0\leq\gamma\leq\beta<1$, we have
				$$\min_{T\in T_\beta}\frac{1}{|T|}\|\Phi_Tx\|_2^2\leq \min_{T\in T_\gamma}\frac{1}{|T|}\|\Phi_Tx\|_2^2\leq \max_{T\in T_\gamma}\frac{1}{|T|}\|\Phi_Tx\|_2^2\leq \max_{T\in T_\beta}\frac{1}{|T|}\|\Phi_Tx\|_2^2,$$
				and
				$$\max_{T\in T_\gamma}\left|\frac{1}{|T|}\|\Phi_Tx\|_2^2-1\right|\leq\max_{T\in T_\beta}\left|\frac{1}{|T|}\|\Phi_Tx\|_2^2-1\right|.$$
				As a consequence, we have $\Omega_{[\theta,\omega],\beta}\subseteq \Omega_{[\theta,\omega],\gamma}$ for all $0\leq\theta\leq\omega\leq\infty$. 
			\end{lem}
			
		We now provide estimates for  $\theta_\beta(\alpha),\omega_\beta(\alpha),\tilde{\theta}_\beta(\alpha),\tilde{\omega}_\beta(\alpha)$. Later we will use these estimates to establish our main results.
		
			\begin{lem}[\textbf{lower estimates for $\theta_\beta(\alpha)$ and $\tilde{\theta}_\beta(\alpha)$}] \label{ltb}For $\beta\in (0,1)$, choose $q_\beta\in\left(0,\frac{1}{2}\right)$ such that 
				$$(1-\beta)\ln(1-\beta)+\beta\ln(\beta)+\frac{q_\beta}{3}(1-\beta)>0,$$
			and let $\alpha\in\left(0,\ln\left((1-\beta)^{1-\beta}\beta^{\beta}\right)+\frac{q_\beta}{3}(1-\beta)\right)$. Denote $t_{\pm 1}\approx 0.0376$ the unique root of 
			$$f(t)=(1-t)\ln(1-t)+t\ln(t)+\frac{1}{6}(1-t)$$
			on $(0,\frac{1}{2})$. Then for each $\beta\in(0,t_{\pm1})$, we have
				$$\theta_\beta(\alpha)\geq \min\left\{ \frac{1}{2}-3(1-\beta)^{-1}\left[\alpha-\ln\left((1-\beta)^{1-\beta}\beta^{\beta}\right)\right],\frac{1}{2}-3\alpha\right\},$$
				$$\tilde{\theta}_\beta(\alpha)\geq(1-\beta)\min\left\{\frac{1}{2}-3(1-\beta)^{-1}\left[\alpha-\ln\left((1-\beta)^{1-\beta}\beta^{\beta}\right)\right],\frac{1}{2}-3\alpha\right\}.$$
			
			\end{lem}
			
			\begin{proof}
				
				Fix $x\in\mathbb{S}^{m-1}$. For $\theta>0$ and $\beta\in\left(0,1\right)$, we have
				$$\Omega_{[\theta,\infty],\beta}=\left\{\frac{1}{|T|}\Vert \Phi_Tx\Vert_2^2\geq\theta,\quad \forall T\subseteq\{1,\dots,n\}, |T^c|\leq\beta n\right\}.$$
			Let $\alpha>0$. To find a lower estimate for $\theta_\beta(\alpha)$, we need to determine the values of $\theta$ which satisfy 
				$$\mathbb{P}(\Omega_{[\theta,\infty],\beta}^c)<\exp(-\alpha n).$$
				Let $|T^c|=k\leq\beta n$ and $\gamma=k/n$. By Lemma\autoref{khb}, we have
				$$\mathbb{P}\left(\| \Phi_Tx\|_2^2\leq \left(\frac{1}{2}-q\right)(n-k)\right)\leq\exp\left(-\frac{q}{3}(n-k)\right),\quad\forall q\in\left(0,\frac{1}{2}\right).$$
				Equivalently, we have
				\begin{equation}\label{sb3}
				\mathbb{P}\left(\frac{1}{|T|}\| \Phi_Tx\|_2^2\leq \frac{1}{2}-q\right)\leq\exp\left(-\frac{q}{3}(1-\gamma)n\right),\quad\forall q\in\left(0,\frac{1}{2}\right).
				\end{equation}
				
				Recall the Stirling's approximation:
				\begin{equation}\label{stir}
				\sqrt{2\pi}n^{n+\frac{1}{2}}e^{-n}\leq n!\leq en^{n+\frac{1}{2}}e^{-n},\quad\forall n\in\mathbb{N}.
				\end{equation}
				It follows that for $k=1,\dots,n-1$, we have
				$$\begin{aligned}
				\binom{n}{k}&=\frac{n!}{(n-k)!k!}\leq\frac{e}{2\pi}\left(\frac{n}{n-k}\right)^{n-k}\left(\frac{n}{k}\right)^{k}\left(\frac{n}{(n-k)k}\right)^\frac{1}{2}\leq\frac{e}{\sqrt{2}\pi}\left(\frac{n}{n-k}\right)^{n-k}\left(\frac{n}{k}\right)^{k},
				\end{aligned}$$
				where in the last inequality we used the fact that $\frac{n}{(n-k)k}\leq2$ whenever $n\geq 2$ and  $1\leq k\leq n-1$. Now consider two cases:
				\begin{itemize}
					
					\item If $\lfloor\beta n\rfloor=0$, by Lemma\autoref{khb}:
					$$\mathbb{P}(\Omega_{[\theta,\infty],\beta}^c)=\mathbb{P}\left(\| \Phi x\|_2^2<\theta n\right)\leq \exp\left(-\frac{1}{3}\left(\frac{1}{2}-\theta\right)n\right),\quad\forall\theta\in\left(0,\frac{1}{2}\right).$$ 
					By letting $\theta\leq \frac{1}{2}-3\alpha$ with $\alpha\in(0,\frac{1}{6})$, we have $\mathbb{P}(\Omega_{[\theta,\infty],\beta}^c)<\exp(-\alpha n)$.
					
					\item If $\lfloor\beta n\rfloor\geq 1$, note that
					\begin{equation}\label{omth}\mathbb{P}(\Omega_{[\theta,\infty],\beta}^c)=\mathbb{P}\left(\min_{|T^c|\leq\beta n}\frac{1}{|T|}\| \Phi_Tx\|_2^2< \theta\right)=\mathbb{P}\left(\min_{T\in T_{\beta_n}}\frac{1}{|T|}\| \Phi_Tx\|_2^2< \theta\right),\end{equation}
					where the last equality follows from Lemma\autoref{ratio} and $\beta_n=\frac{\lfloor \beta n\rfloor}{n}$. Let $k_n:=\beta_nn$, we have
					$$\begin{aligned}
					\binom{n}{k_n}&\leq \left(\frac{e}{\sqrt{2}\pi}\right)\left(\frac{1}{1-\beta_n}\right)^{(1-\beta_n)n}\left(\frac{1}{\beta_n}\right)^{\beta_n n}=\left(\frac{e}{\sqrt{2}\pi}\right)\exp\left(n\ln\left((1-\beta_n)^{-(1-\beta_n)}\beta_n^{-\beta_n}\right)\right).
					\end{aligned}$$
					As \eqref{sb3} holds when $|T^c|=k_n$ and $\gamma=\beta_n$, it follows that for every $q\in\left(0,\frac{1}{2}\right)$:
				\begin{equation}\label{unbd}\begin{aligned}\mathbb{P}\left(\min_{T\in T_{\beta_n}}\frac{1}{|T|}\| \Phi_Tx\|_2^2\leq \frac{1}{2}-q\right)\leq&\binom{n}{k_n}\exp\left(-\frac{q}{3}(1-\beta_n)n\right)\\
					\leq &\left(\frac{e}{\sqrt{2}\pi}\right)\exp\left[-n\ln\left((1-\beta_n)^{(1-\beta_n)}\beta_n^{\beta_n}\right)-\frac{q}{3}(1-\beta_n)n\right].
					\end{aligned}\end{equation}
					Now we want to bound $\mathbb{P}(\Omega_{[\theta,\infty],\beta}^c)$ from above by $C\exp(-\alpha n)$ where $C$ is a positive constant independent of $n$. From what we have so far, it suffices to have $C=\frac{e}{\sqrt{2}\pi}$ and
					\begin{equation}\label{sb4}
					(1-\beta_n)\ln(1-\beta_n)+\beta_n\ln(\beta_n)+\frac{q}{3}(1-\beta_n)\geq \alpha.\end{equation}
					For every $c\in\left(0,\frac{1}{2}\right]$, define
					\begin{equation}\label{ft}
					f_c(t)=(1-t)\ln(1-t)+t\ln(t)+\frac{c}{3}(1-t),\quad t\in(0,1).
					\end{equation}
					Then 
					$$f_c'(t)=\ln(t)-\ln(1-t)-\frac{c}{3},\quad\forall t\in(0,1).$$
					It is easy to see that $f'_c(t)<0$ whenever $0<t<\frac{\exp\left(\frac{c}{3}\right)}{1+\exp\left(\frac{c}{3}\right)}$. Since $\frac{\exp\left(\frac{c}{3}\right)}{1+\exp\left(\frac{c}{3}\right)}\geq\frac{1}{2}$ whenever $0<c\leq\frac{1}{2}$, it follows that $f_c(t)$ is strictly decreasing on $\left(0,\frac{1}{2}\right)$, and
					$$f_c\left(\frac{1}{2}\right)=\ln\frac{1}{2}+\frac{c}{6}\leq -\ln 2+\frac{1}{12}<0.$$
					On the other hand, note that
					$$\lim_{t\to 0^+}f_c(t)=\frac{c}{3}>0.$$
					It follows that $f_c$ has a unique root on $\left(0,\frac{1}{2}\right)$, call this root $t_c$. Thus we have
					$$(1-t_c)\ln(1-t_c)+t_c\ln(t_c)+\frac{c}{3}(1-t_c)=0.$$
					Note that for $c'\geq c$, we have
					$$(1-t_c)\ln(1-t_c)+t_c\ln(t_c)+\frac{c'}{3}(1-t_c)\geq 0.$$
				Since $f_{c'}(t)$ is strictly decreasing on $\left(0,\frac{1}{2}\right)$, it follows that $t_{c'}\geq t_c$. That is, $t_c$ increases as $c$ increases. Define
					$$t_{\pm 1}:=t_{\frac{1}{2}}.$$
				It follows that
					$$(1-t_{\pm 1})\ln(1-t_{\pm 1})+t_{\pm 1}\ln(t_{\pm 1})+\frac{1}{6}(1-t_{\pm 1})=0.$$
				Whenever $\beta\in(0,t_{\pm 1})$, it's not difficult to see that there exists $c_{\beta}\in\left(0,\frac{1}{2}\right)$ such that
					$$f_{c_{\beta}}(\beta)=(1-\beta)\ln(1-\beta)+\beta\ln(\beta)+\frac{c_{\beta}}{3}(1-\beta)=0.$$
			Indeed, $c_{\beta}=-3(\ln(1-\beta)+\frac{\beta}{1-\beta}\ln(\beta))$, see Figure \ref{cbeta} for an illustration.	Choose $q_\beta\in(c_{\beta},\frac{1}{2})$, we have	
			$$\begin{aligned}&(1-\beta_n)\ln(1-\beta_n)+\beta_n\ln(\beta_n)+\frac{q_\beta}{3}(1-\beta_n)\\
			\geq&(1-\beta)\ln(1-\beta)+\beta\ln(\beta)+\frac{q_\beta}{3}(1-\beta)\\
			>&0.\end{aligned}$$
			
			\begin{figure}
				\centering
				\includegraphics[width=0.5\linewidth]{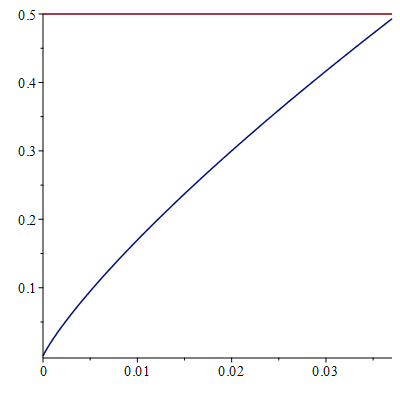}
				\caption{The graph above is a plot of $c_{\beta}$ when $\beta\in(0,t_{\pm1})$. The blue curve is the graph of $c_{\beta}$ as a function of $\beta$. The horizontal red line is the graph for the constant function which attains the value $\frac{1}{2}$ at every point. The figure illustrates that $c_{\beta}<\frac{1}{2}$ whenever $\beta\in(0,t_{\pm 1})$}
				\label{cbeta}
			\end{figure}

			Now for $\alpha\in\left(0,(1-\beta)\ln(1-\beta)+\beta\ln(\beta)+\frac{q_\beta}{3}(1-\beta)\right)$, we have
					$$q_\beta\geq 3(1-\beta_n)^{-1}\left[\alpha-\ln\left((1-\beta_n)^{1-\beta_n}\beta_n^{\beta_n}\right)\right].$$
					Therefore, if $\theta\leq \frac{1}{2}-3(1-\beta)^{-1}\left[\alpha-\ln\left((1-\beta)^{1-\beta}\beta^{\beta}\right)\right]$, it follows from \eqref{omth} and \eqref{unbd} that
					$$\mathbb{P}(\Omega_{[\theta,\infty],\beta}^c)\leq\frac{e}{\sqrt{2}\pi}\exp(-\alpha n).$$
					
				\end{itemize}
				
			Consequently, we have
				$$\alpha<\min\left\{\ln\left((1-\beta)^{1-\beta}\beta^{\beta}\right)+\frac{q_{\beta}}{3}(1-\beta),\frac{1}{6}\right\},$$
				and
				$$\theta\leq \min\left\{ \frac{1}{2}-(1-\beta)^{-1}\left[\alpha-\ln\left((1-\beta)^{1-\beta}\beta^{\beta}\right)\right],\frac{1}{2}-3\alpha\right\}.$$
				As $f_{q_\beta}$ is decreasing on $(0,\frac{1}{2})$ and $\lim_{t\to 0^+}f_{q_\beta}(t)=\frac{q_\beta}{3}<\frac{1}{6}$, it follows that
				$$\ln\left((1-\gamma)^{1-\gamma}\gamma^{\gamma}\right)+\frac{q_{\beta}}{3}(1-\gamma)<\frac{1}{6},\quad \forall0<\gamma<t_{\pm 1}<\frac{1}{2}.$$
				Thus $\alpha<\ln\left((1-\beta)^{1-\beta}\beta^{\beta}\right)+\frac{q_{\beta}}{3}(1-\beta)$ whenever $\beta\in(0, t_{\pm 1})$. This finishes the proof of the lower estimate for $\theta_\beta(\alpha)$.\\
				
				\vspace{0.2cm}
				
				To obtain the lower estimate for $\tilde{\theta}_\beta(\alpha)$, observe that
				$$\frac{1}{n}\sum_{j=k_n+1}^ny_{(j)}^2=\frac{n-k_n}{n}\frac{1}{n-k_n}\sum_{j=k_n+1}^ny_{(j)}^2=\frac{1-\beta_n}{n-k_n}\sum_{j=k_n+1}^ny_{(j)}^2,$$
			where 
			$$y:=[y_1,\dots,y_n]^T=\Phi x.$$
			It follows that for $\theta> 0$:
				$$\begin{aligned}\mathbb{P}(\tilde{\Omega}_{[\theta,\infty],\beta})&=\mathbb{P}\left(\min_{T\in T_\beta}\frac{1}{n}\Vert \Phi_Tx\Vert_2^2\geq\theta\right)=\mathbb{P}\left(\frac{1}{n}\sum_{j=k_n+1}^ny_{(j)}^2\geq\theta\right)\\
				&=\mathbb{P}\left(\sqrt{\frac{1}{n-k_n}\sum_{j=k_n+1}^ny_{(j)}^2}\geq\sqrt{\frac{\theta}{1-\beta_n}}\right)=\mathbb{P}(\Omega_{\left[\frac{\theta}{1-\beta_n},\infty\right],\beta})\geq\mathbb{P}(\Omega_{\left[\frac{\theta}{1-\beta},\infty\right],\beta}).
				\end{aligned}$$
				Thus $\tilde{\theta}_\beta(\alpha)\geq (1-\beta)\theta_\beta(\alpha)$ and the lower estimate of $\tilde{\theta}_\beta(\alpha)$ follows immediately. \end{proof}
			
			The upper estimates for $\theta_\beta(\alpha)$ and $\tilde{\theta}_\beta(\alpha)$ can be deduced by applying the same argument  as in the proof of \cite[Lemma 4.1]{rp} with only a little modification. 
			
			\begin{lem}[\textbf{upper estimates for $\theta_\beta(\alpha)$ and $\tilde{\theta}_\beta(\alpha)$}]\label{ut} For $\beta\in (0,1)$ and $\alpha\in(0,1)$, we have $\theta_\beta(\alpha)\leq 1$ and $\tilde{\theta}_\beta(\alpha)\leq 1-\beta$.	
			\end{lem}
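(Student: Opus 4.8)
\emph{Proof proposal.} The plan is to test the events that define $\theta_\beta(\alpha)$ and $\tilde\theta_\beta(\alpha)$ against one carefully chosen row-selection $T$, and then to contradict the required probability lower bound by the concentration estimate for $\pm1$ random matrices recorded in the Remark following Theorem~\ref{con}.

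First I would record the monotonicity built into the definitions: if $\theta$ satisfies $\mathbb P(\Omega_{[\theta,\infty],\beta})\ge 1-e^{-\alpha n}$ for all $n$, then, since $\Omega_{[\theta',\infty],\beta}\supseteq\Omega_{[\theta,\infty],\beta}$ whenever $\theta'\le\theta$, so does every $\theta'\le\theta$; hence the admissible thresholds form a down-set, and $\theta_\beta(\alpha)\le 1$ will follow once I rule out every $\theta$ with $1<\theta<\tfrac52$. For such a $\theta$, note that $T=\{1,\dots,n\}$ has $|T^c|=0\le\beta n$, so $\Omega_{[\theta,\infty],\beta}\subseteq\{\tfrac1n\|\Phi x\|_2^2\ge\theta\}$. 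Since the entries of $\Phi$ are $\pm1$, each $\langle\phi_i,x\rangle$ has variance $\|x\|_2^2=1$, so $\mathbb E\|\Phi x\|_2^2=n$, and the concentration estimate for $\pm1$ matrices gives $\mathbb P(\tfrac1n\|\Phi x\|_2^2\ge\theta)\le\exp(-c_\theta n)$ with $c_\theta=\tfrac{(\theta-1)^2}{4}-\tfrac{(\theta-1)^3}{6}>0$. This tends to $0$, contradicting $\mathbb P(\Omega_{[\theta,\infty],\beta})\ge 1-e^{-\alpha n}\to 1$; hence $\theta_\beta(\alpha)\le 1$.

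For $\tilde\theta_\beta(\alpha)$ I would reuse the identity obtained in the proof of Lemma~\ref{ltb}: with $k=\lfloor\beta n\rfloor$ and $\gamma=k/n$, the minimum of $\tfrac1n\|\Phi_T x\|_2^2$ over $|T^c|\le\beta n$ is attained by discarding the $k$ largest among the numbers $|\langle\phi_i,x\rangle|$, which gives $\tilde\Omega_{[\theta,\infty],\beta}=\Omega_{[\theta/(1-\gamma),\infty],\beta}$. If $\theta>1-\beta$, then $\theta/(1-\gamma)\to\theta/(1-\beta)>1$, so, taking $\theta$ close enough to $1-\beta$, there are $\delta\in(0,\tfrac32)$ and $N$ with $\theta/(1-\gamma)\ge 1+\delta$ for every $n\ge N$; hence, by the monotonicity above and the previous paragraph, $\mathbb P(\tilde\Omega_{[\theta,\infty],\beta})=\mathbb P(\Omega_{[\theta/(1-\gamma),\infty],\beta})\le\mathbb P(\Omega_{[1+\delta,\infty],\beta})\le\exp(-c_\delta n)\to 0$ with $c_\delta=\tfrac{\delta^2}{4}-\tfrac{\delta^3}{6}>0$, again a contradiction. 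Thus $\tilde\theta_\beta(\alpha)\le 1-\beta$. Alternatively one can bypass the identity and bound $\tfrac1n\sum_{j=k+1}^n y_{(j)}^2\le\tfrac{n-k}{n}\cdot\tfrac1n\|\Phi x\|_2^2$ directly, using that the mean of the $n-k$ smallest squared entries never exceeds the overall mean.

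Both ingredients --- the scalar concentration bound and the down-set observation --- are routine. The only point that needs care, which is the step I would be most cautious about, is the bookkeeping: keep $\gamma=\lfloor\beta n\rfloor/n$ (not $\beta$) until the limit $n\to\infty$, and restrict attention to thresholds only slightly above $1$ (respectively $1-\beta$) so that $c_\theta$ (respectively $c_\delta$) stays positive. I do not expect a genuine obstacle; morally the lemma just says $\tfrac1n\|\Phi x\|_2^2$ cannot overshoot its mean $1$, and the retained block of $\tfrac1n\|\Phi_T x\|_2^2$ cannot overshoot $\tfrac{|T|}{n}\approx 1-\beta$, with overwhelming probability.
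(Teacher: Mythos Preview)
Your proposal is correct and follows essentially the same approach the paper points to: the paper does not write out a proof but defers to \cite[Lemma~4.1]{rp} ``with only a little modification,'' and the modification is precisely what you supply---replacing Gaussian upper-tail concentration by the $\pm1$ bound from the Remark after Theorem~\ref{con}, then testing against $T=\{1,\dots,n\}$ and invoking the order-statistics identity from the end of the proof of Lemma~\ref{ltb}. Your bookkeeping (down-set structure, handling $\gamma=\lfloor\beta n\rfloor/n$ versus $\beta$, and keeping $\epsilon=\theta-1$ small enough that $c_\theta>0$) is exactly the care that argument requires, and your alternative route via $\tfrac{1}{n-k}\sum_{j>k}y_{(j)}^2\le\tfrac1n\|\Phi x\|_2^2$ is a clean way to bypass the identity altogether.
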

			\begin{proof}Fix $x\in\mathbb{S}^{m-1}$, let $y=Ax$. Set $\beta_n=\frac{\lfloor\beta n\rfloor}{n}$ and $k_n:=\beta_nn$. For every $\theta>0$, we have
				$$\mathbb{P}(\Omega_{[\theta,\infty],\beta})=\mathbb{P}\left(\sqrt{\frac{1}{n-k_n}\sum_{j=k_n+1}^n y_{(j)}^2}\geq\sqrt{\theta}\right).$$
			Note that
				$$\begin{aligned}
				\mathbb{E}\sqrt{\frac{1}{n-k_n}\sum_{j=k_n+1}^n y_{(j)}^2}&\leq\sqrt{\frac{1}{n-k_n}\sum_{j=k_n+1}^n \mathbb{E}y_{(j)}^2}\leq\sqrt{\frac{1}{n}\sum_{j=1}^n \mathbb{E}y_{(j)}^2}=1.
				\end{aligned}$$
			If $\delta=\sqrt{\theta}-1>0$, we have
				$$\begin{aligned}
				\mathbb{P}(\Omega_{[\theta,\infty],\beta})&=\mathbb{P}\left(\sqrt{\frac{1}{n-k_n}\sum_{j=k_n+1}^n y_{(j)}^2}-\mathbb{E}\sqrt{\frac{1}{n-k_n}\sum_{j=k_n+1}^n y_{(j)}^2}\geq\sqrt{\theta}-\mathbb{E}\sqrt{\frac{1}{n-k_n}\sum_{j=k_n+1}^n y_{(j)}^2}\right)\\
				&\leq \mathbb{P}\left(\sqrt{\frac{1}{n-k_n}\sum_{j=k_n+1}^n y_{(j)}^2}-\mathbb{E}\sqrt{\frac{1}{n-k_n}\sum_{j=k_n+1}^n y_{(j)}^2}\geq\sqrt{\theta}-1\right)\\
				&\leq\exp\left(-\frac{\delta^2}{5}(n-k_n)\right)\quad (\text{by Theorem\autoref{lcsg})}\\
				&=\exp\left(-\frac{\delta^2}{5}(1-\beta_n)n\right).
				\end{aligned}$$
		If $\mathbb{P}(\Omega_{[\theta,\infty],\beta})\geq 1-e^{-\alpha n}$, we will have $1-e^{-\alpha n}\leq \exp\left(-\frac{\delta^2}{5}(1-\beta_n)n\right)$ for all $n\geq 2$. By letting $n\to\infty$, we have $1\leq 0$, which is impossible. Hence we must have $\delta<0$, i.e. $\theta\leq 1$, and thus $\theta_\beta(\alpha)\leq 1$. Finally, $\tilde{\theta}_\beta(\alpha)\leq 1-\beta$ follows immediately from
		$$\mathbb{P}(\tilde{\Omega}_{[\theta,\infty],\beta})=\mathbb{P}(\Omega_{\left[\frac{\theta}{1-\beta_n},\infty\right],\beta}),\quad \theta>0.$$
	\end{proof}

			Next, we estimate $\omega_\beta(\alpha)$ and $\tilde{\omega}_\beta(\alpha)$. The following result is a generalization of the corresponding result for the Gaussian case. One can verify that the result actually holds as long as $\Phi$ has entries drawn from i.i.d. sub-gaussian random variables. 
			
			\begin{thm}[\textbf{Estimates of $\omega_\beta(\alpha)$ and  $\tilde{\omega}_\beta(\alpha)$}]\label{eo} For $\beta\in(0,1)$ and $\alpha>0$, we have
				$$\left(\sqrt{\frac{5\alpha}{1-\beta}}+\sqrt{2e\ln\frac{e}{1-\beta}}\right)^2\geq \omega_\beta(\alpha)\geq\begin{cases}1, &\beta n<1\\
				\frac{1}{1-\frac{\beta}{2}}, &\beta n\geq 1,\end{cases}$$
				$$1\leq \tilde{\omega}_\beta(\alpha)\leq 1+\sqrt{12\alpha}.$$
			\end{thm}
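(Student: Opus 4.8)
The plan is to convert all four inequalities into statements about the i.i.d.\ coordinates $y_j:=(\Phi x)_j$ of the vector $\Phi x$. Since $x\in\mathbb{S}^{m-1}$ and $\Phi$ has $\pm1$ entries, $y_j\sim\sub(1)$ with $\mathbb{E}(y_j^2)=1$, and for $|T^c|=k$ one has $\max_{|T|=n-k}\|\Phi_Tx\|_2^2=\sum_{j=1}^{n-k}y_{(j)}^2$. A one–line monotonicity observation (deleting the smallest retained term of an average increases it) shows $\frac1{n-k}\sum_{j=1}^{n-k}y_{(j)}^2$ is nondecreasing in $k$, so the events collapse to
$$\Omega_{[0,\omega],\beta}^c=\Bigl\{\tfrac1{n-k_0}\textstyle\sum_{j=1}^{n-k_0}y_{(j)}^2>\omega\Bigr\},\qquad k_0:=\lfloor\beta n\rfloor,$$
while taking $T=\{1,\dots,n\}$ gives $\tilde\Omega_{[0,\omega],\beta}^c=\{\tfrac1n\|\Phi x\|_2^2>\omega\}$ for every $\beta$. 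After these reductions the statement splits into four essentially independent exercises in the concentration tools of Section~2.

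For the upper bound on $\omega_\beta(\alpha)$ I would feed $S=\{1,\dots,n-k_0\}$, so $|S|=n-k_0\ge(1-\beta)n$, into the displayed order–statistics Lipschitz estimate proved just before Corollary~\ref{key}, combined with Corollary~\ref{key} in the form $\mathbb{E}\sqrt{\tfrac1{|S|}\sum_{j=1}^{|S|}y_{(j)}^2}\le\sqrt{2eb^2\ln\tfrac{en}{|S|}}\le\sqrt{2eb^2\ln\tfrac e{1-\beta}}$; the choice $t=\sqrt{5b^2\alpha/(1-\beta)}$ turns the concentration exponent into exactly $\alpha n$ and yields the threshold $\bigl(\sqrt{5b^2\alpha/(1-\beta)}+\sqrt{2eb^2\ln\tfrac e{1-\beta}}\bigr)^2$, so that value of $\omega$ obeys $\mathbb{P}(\Omega_{[0,\omega],\beta}^c)\le e^{-\alpha n}$ for every $n$, which is the asserted bound. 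For $\tilde\omega_\beta(\alpha)\le1+\sqrt{12\alpha}$ I would invoke the $\pm1$ concentration recorded in the Remark following Theorem~\ref{con}, namely $\mathbb{P}(\tfrac1n\|\Phi x\|_2^2-1>\epsilon)<\exp(-(\tfrac{\epsilon^2}4-\tfrac{\epsilon^3}6)n)\le\exp(-\tfrac{\epsilon^2}{12}n)$ for $\epsilon\in(0,1]$, and set $\epsilon=\sqrt{12\alpha}$ (for $\alpha$ small enough that $\sqrt{12\alpha}\le1$), so that $\mathbb{P}(\tilde\Omega_{[0,1+\sqrt{12\alpha}],\beta}^c)\le e^{-\alpha n}$.

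The two lower bounds $\omega_\beta(\alpha)\ge1$ (when $\beta n<1$) and $\tilde\omega_\beta(\alpha)\ge1$ are routine: both events then equal $\{\tfrac1n\|\Phi x\|_2^2\le\omega\}$, and since $\tfrac1n\|\Phi x\|_2^2=\tfrac1n\sum y_j^2\to1$ a.s.\ (equivalently, the matching one–sided concentration bound forces $\mathbb{P}(\tfrac1n\|\Phi x\|_2^2\le\omega)\to0$ for fixed $\omega<1$), the defining requirement $\mathbb{P}\ge1-e^{-\alpha n}$ fails for $n$ large. The one genuinely new ingredient is $\omega_\beta(\alpha)\ge\frac1{1-\beta/2}$ for $\beta n\ge1$, where I would exhibit an explicit extremal test vector: take $m\ge2$ and $x=\tfrac1{\sqrt2}(e_1+e_2)$, so $y_j^2\in\{0,2\}$ with equal probabilities and $B:=\#\{j:y_j^2=2\}\sim\mathrm{Bin}(n,\tfrac12)$. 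Then $\max_{|T|=n-k_0}\|\Phi_Tx\|_2^2=2\min(B,n-k_0)$, hence $\Omega_{[0,\omega],\beta}^c=\{B>\tfrac12\omega(n-k_0)\}$. For any fixed $\omega<\frac1{1-\beta/2}=\frac2{2-\beta}$ one has $\tfrac12\omega(1-\beta)<\frac{1-\beta}{2-\beta}<\frac12$, so, using $n-k_0\le(1-\beta)n+1$, the threshold $\tfrac12\omega(n-k_0)$ lies below $(\tfrac12-\delta)n$ for some $\delta=\delta(\beta,\omega)>0$ and all large $n$; Hoeffding then gives $\mathbb{P}(B>(\tfrac12-\delta)n)\ge1-e^{-2\delta^2n}\to1$, which eventually exceeds $e^{-\alpha n}$, so no $\omega<\frac1{1-\beta/2}$ can satisfy the defining condition.

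I expect this last step to be the crux. The difficulty is not computational but structural: one has to recognize that the ``spread'' unit vector $x=\tfrac1{\sqrt2}(e_1+e_2)$, whose squared coordinates take only the values $0$ and $2$, is the extremal choice, and that the binomial threshold $\tfrac12\omega(n-k_0)$ sits \emph{strictly below} the mean $n/2$ precisely because $\beta>0$ — which is exactly what converts ``$\Omega_{[0,\omega],\beta}^c$ occurs'' from an exponentially rare event into an overwhelmingly likely one. Everything else is bookkeeping with the already established order–statistics concentration inequality, Corollary~\ref{key}, and the Remark after Theorem~\ref{con}; the $\tilde\omega_\beta(\alpha)$ part in particular never even sees the erasures, since there the worst sub–matrix is the full matrix.
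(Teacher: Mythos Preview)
Your proposal is correct. The paper itself gives no proof beyond the one-line remark that ``the arguments are the same as in the Gaussian case from section~4 of \cite{rp} only up to slight modifications,'' so you have in effect supplied those details using exactly the tools the paper set up in Section~2 (the order-statistics Lipschitz bound preceding Corollary~\ref{key}, Corollary~\ref{key} itself, and the Remark after Theorem~\ref{con}); this is the intended route.

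Two brief comments. First, your reduction $\tilde\Omega_{[0,\omega],\beta}^c=\{\tfrac1n\|\Phi x\|_2^2>\omega\}$ and the monotonicity observation collapsing $\Omega_{[0,\omega],\beta}^c$ to $k_0=\lfloor\beta n\rfloor$ are exactly right and make the upper bounds fall out cleanly; the caveat $\sqrt{12\alpha}\le1$ you flag is harmless, since every downstream use in the paper (Theorems~\ref{sripb} and~\ref{rjl2}) already imposes $\alpha<\tfrac1{12}$. Second, your lower-bound construction with $x=\tfrac1{\sqrt2}(e_1+e_2)$ is a genuinely $\pm1$-specific argument---the Gaussian proof in \cite{rp} cannot use a two-point distribution for $y_j^2$---and it is in fact \emph{stronger} than what is stated: since $B\approx n/2$ with overwhelming probability and $\tfrac{2B}{n-k_0}\to\tfrac1{1-\beta}$, your argument actually yields $\omega_\beta(\alpha)\ge\tfrac1{1-\beta}$, not merely $\tfrac1{1-\beta/2}$. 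The weaker bound in the theorem is presumably inherited from the Gaussian computation in \cite{rp}; you have proved more than you needed.
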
	
			\begin{proof}Fix $x\in\mathbb{S}^{m-1}$, let $y=\Phi x$. Set $\beta_n=\frac{\lfloor\beta n\rfloor}{n}$ and $k_n:=\beta_nn$.\\
				
				\vspace{0.2cm}
				
				First we estimate $\tilde{\omega}_\beta(\alpha)$. For $\omega>0$, by definition:
				$$\tilde{\Omega}_{[0,\omega],\beta}=\left\{\max_{T\in T_\beta}\frac{1}{n}\Vert \Phi_Tx\Vert_2^2\leq\omega\right\}=\left\{\frac{1}{n}\Vert \Phi x\Vert_2^2\leq\omega\right\}=\tilde{\Omega}_{[0,\omega],0}.$$
				We claim that $\tilde{\omega}_\beta(\alpha)\geq 1$. Assume not, then there exists $\omega<1$ such that 
				$$\mathbb{P}(\tilde{\Omega}_{[0,\omega],\beta})>1-\exp(-\alpha n).$$
				On the other hand, from the concentration inequality for Bernoulli random matrices (Theorem\autoref{con} and Remark\autoref{conber}), we have
				$$\mathbb{P}(\tilde{\Omega}_{[0,\omega],\beta})<\exp\left(-\frac{1}{12}(1-\omega)^2 n\right).$$
				Hence we have $1-\exp(-\alpha n)<\exp\left(-\frac{1}{12}(1-\omega)^2 n\right)$ for all $n\geq 2$. By letting $n\to\infty $, we have $1<0$, which is a contradiction. Therefore $\tilde{\omega}_\beta(\alpha)\geq 1$.\\
				
				\vspace{0.2cm}
				
				For the upper estimate for $\tilde{\omega}_\beta(\alpha)$:
				$$\mathbb{P}\left(\frac{1}{n}\Vert \Phi x\Vert_2^2\leq1+\varepsilon\right)\geq 1-\exp\left(-\frac{1}{12}\varepsilon^2 n\right)=1-\exp(-\alpha n),$$
				where $\varepsilon=\sqrt{12\alpha}$. Thus $\tilde{\omega}_\beta(\alpha)\leq 1+\sqrt{12\alpha}.$ This finishes the estimation for $\tilde{\omega}_\beta(\alpha)$.\\
				
				\vspace{0.2cm}
				
				Next, we estimate $\omega_\beta(\alpha)$. Note that for every $\omega\geq 0$, we have
				$$\mathbb{P}(\Omega_{[0,\omega],\beta})=\mathbb{P}\left(\sqrt{\frac{1}{n-k_n}\sum_{j=1}^{n-k_n}y_{(j)}^2}\leq\sqrt{\omega}\right).$$
				Since a Bernoulli random variable is sub-gaussian with $b=1$, Corollary\autoref{key} yields
				$$\mathbb{E}\sqrt{\frac{1}{n-k_n}\sum_{j=1}^{n-k_n}y_{(j)}^2}\leq\sqrt{2e\ln\frac{en}{n-k_n}}=\sqrt{2e\ln\frac{e}{1-\beta_n}}.$$
				Therefore
				$$\begin{aligned}
				\mathbb{P}(\Omega_{[0,\omega],\beta})&=\mathbb{P}\left(\sqrt{\frac{1}{n-k_n}\sum_{j=1}^{n-k_n}y_{(j)}^2}-\mathbb{E}\sqrt{\frac{1}{n-k_n}\sum_{j=1}^{n-k_n}y_{(j)}^2}\leq\sqrt{\omega}-\mathbb{E}\sqrt{\frac{1}{n-k_n}\sum_{j=1}^{n-k_n}y_{(j)}^2}\right)\\
				&\geq \mathbb{P}\left(\sqrt{\frac{1}{n-k_n}\sum_{j=1}^{n-k_n}y_{(j)}^2}-\mathbb{E}\sqrt{\frac{1}{n-k_n}\sum_{j=1}^{n-k_n}y_{(j)}^2}\leq\sqrt{\omega}-\sqrt{2e\ln\frac{e}{1-\beta_n}}\right)\\
				&\geq1-\exp\left(-\frac{\delta^2}{5}(n-k_n)\right)\\
				&\geq 1-\exp(-\alpha n),
				\end{aligned}$$
				provided that
				$$\delta:=\sqrt{\omega}-\sqrt{2e\ln\frac{e}{1-\beta_n}}\geq\sqrt{\frac{5\alpha}{1-\beta_n}}>0.$$
				By $0\leq\beta_n\leq\beta<1$, it follows that $\mathbb{P}(\Omega_{[0,\omega],\beta})\geq 1-\exp(-\alpha n)$ if
				$$\sqrt{\omega}\geq\sqrt{\frac{5\alpha}{1-\beta}}+\sqrt{2e\ln\frac{e}{1-\beta}}.$$
				Therefore
				$$\omega_\beta(\alpha)\leq\left(\sqrt{\frac{5\alpha}{1-\beta}}+\sqrt{2e\ln\frac{e}{1-\beta}}\right)^2.$$
				This proves the upper estimate for $\omega_\beta(\alpha)$. For the lower estimate of $\omega_\beta(\alpha)$, note that if $k_n=0$, then we have $\Omega_{[0,\omega],\beta}=\tilde{\Omega}_{[0,\omega],\beta}$, so $\omega_\beta(\alpha)\geq 1.$ If $k_n>0$, then $\beta_n>\frac{\beta}{2}$ and 
				$$\frac{1}{n-k_n}\sum_{j=1}^{k_n}y_{(j)}^2=\frac{n}{n-k_n}\frac{1}{n}\sum_{j=1}^{k_n}y_{(j)}^2=\frac{1}{1-\beta_n}\frac{1}{n}\sum_{j=1}^{k_n}y_{(j)}^2.$$
				It follows that for $\omega\geq 0$:
				$$\begin{aligned}\mathbb{P}(\Omega_{[0,\omega],\beta})&=\mathbb{P}\left(\min_{T\in T_{\beta_n}}\frac{1}{|T|}\Vert \Phi_Tx\Vert_2^2\leq\omega\right)\\
				&=\mathbb{P}\left(\frac{1}{n-k_n}\sum_{j=1}^{k_n}y_{(j)}^2\leq\omega\right)\\
				&=\mathbb{P}\left(\sqrt{\frac{1}{n}\sum_{j=1}^{k_n}y_{(j)}^2}\leq\sqrt{\omega(1-\beta_n)}\right)\\
				&=\mathbb{P}(\tilde{\Omega}_{\left[0,\omega(1-\beta_n)\right],\beta})\\
				&\leq\mathbb{P}(\tilde{\Omega}_{\left[0,\omega\left(1-\frac{\beta}{2}\right)\right],\beta}).
				\end{aligned}$$
				Hence
				$$\begin{aligned}
				\omega_\beta(\alpha)&=\inf\{\omega: \mathbb{P}(\Omega_{[0,\omega],\beta})>1-\exp(-\alpha n)\}\\
				&\geq \inf\{\omega: \mathbb{P}(\tilde{\Omega}_{\left[0,\omega\left(1-\frac{\beta}{2}\right)\right],\beta})>1-\exp(-\alpha n)\}\\
				&=\frac{\tilde{\omega}_\beta(\alpha)}{1-\frac{\beta}{2}}\\
				&\geq\frac{1}{1-\frac{\beta}{2}}.
				\end{aligned}$$
				This finishes the estimation for $\omega_\beta(\alpha)$, and the proof is now complete.\end{proof}

			The last supporting result we need is the following well-known result (see e.g. \cite{CA}) on approximating the unit sphere with its finite sets.
			
				\begin{lem}\label{net}Let $S\subseteq\{1,\dots,m\}$ with $|S|=s$. Set
					$$\mathbb{S}^{m-1}_S=\{x\in \mathbb{S}^{m-1}: \supp(x)\subseteq S\}.$$
					Then for any $\epsilon>0$ there exists an $\epsilon$-net $Q_{S,\epsilon}\subseteq \mathbb{S}^{m-1}_S$ satisfying
					\begin{itemize}
						
						\item $\mathbb{S}^{m-1}_S\subseteq\bigcup_{q\in Q_{S,\epsilon}}B_{\frac{\epsilon}{8}}(q)$, where $B_{\frac{\epsilon}{8}}(q)=\{v\in\mathbb{R}^m:\|v-q\|_2<\frac{\epsilon}{8}\}$ .
						
						\item $|Q_{S,\epsilon}|\leq\left(\frac{24}{\epsilon}\right)^s$.
						
					\end{itemize}
				\end{lem}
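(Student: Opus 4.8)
The plan is to reduce the claim to the classical volumetric estimate for covering numbers of a Euclidean unit sphere. Since every $x\in\mathbb{S}^{m-1}_S$ has $\supp(x)\subseteq S$ and $|S|=s$, restricting to the coordinates indexed by $S$ identifies $\mathbb{S}^{m-1}_S$ isometrically with the unit sphere $\mathbb{S}^{s-1}$ of $\mathbb{R}^s$. Hence it suffices to construct, for each $\delta\in(0,1]$, a subset $Q$ of $\mathbb{S}^{s-1}$ with $\mathbb{S}^{s-1}\subseteq\bigcup_{q\in Q}B_\delta(q)$ and $|Q|\leq(3/\delta)^s$, and then take $\delta=\epsilon/8$, so that $3/\delta=24/\epsilon$; pulling $Q$ back through the isometry produces $Q_{S,\epsilon}\subseteq\mathbb{S}^{m-1}_S$ with the two asserted properties (note the covering radius is $\epsilon/8$, matching the statement).

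To build $Q$, I would take it to be a maximal $\delta$-separated subset of $\mathbb{S}^{s-1}$, i.e.\ a maximal set with $\|p-q\|_2>\delta$ for all distinct $p,q\in Q$; such a set exists by Zorn's lemma, and the cardinality bound obtained below shows a posteriori that it is finite (equivalently, one runs a greedy selection on the compact set $\mathbb{S}^{s-1}$). Maximality is exactly what yields the covering property: if some $y\in\mathbb{S}^{s-1}$ had $\|y-q\|_2>\delta$ for all $q\in Q$, then $Q\cup\{y\}$ would still be $\delta$-separated, contradicting maximality, so $\mathbb{S}^{s-1}\subseteq\bigcup_{q\in Q}B_\delta(q)$.

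Finally, the size of $Q$ is controlled by a packing/volume comparison. Because the points of $Q$ are $\delta$-separated, the open balls $B_{\delta/2}(q)$, $q\in Q$, are pairwise disjoint and each is contained in $B_{1+\delta/2}(0)\subseteq\mathbb{R}^s$; comparing $s$-dimensional Lebesgue volumes gives $|Q|\,(\delta/2)^s\leq(1+\delta/2)^s$, i.e.\ $|Q|\leq(1+2/\delta)^s$. For $\delta\leq 1$ this is at most $(3/\delta)^s$, which with $\delta=\epsilon/8$ (so $\epsilon\leq 8$, the range relevant to all our applications) gives $|Q_{S,\epsilon}|\leq(24/\epsilon)^s$; for larger $\epsilon$ the volumetric bound still applies and the constant can be absorbed. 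There is no serious obstacle here — the argument is entirely standard (cf.\ \cite{CA}); the only points needing a little care are that the net must be chosen to lie on the sphere itself, which the maximal-separated-set construction guarantees, and the simplification of $(1+2/\delta)^s$ to the clean form $(24/\epsilon)^s$.
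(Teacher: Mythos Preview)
Your argument is correct and is precisely the standard volumetric packing argument that the cited reference \cite{CA} contains; the paper itself does not give a proof of this lemma but simply refers the reader to \cite{CA}. Your only caveat---that the clean bound $(24/\epsilon)^s$ requires $\epsilon\leq 8$---is appropriate, since the lemma as stated is vacuously false for $\epsilon>24$ (the right-hand side drops below $1$), and in the paper's sole application (Theorem~\ref{sripb}) one has $\epsilon\in(0,1)$ anyway.
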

			
			\subsection{Main Results}
			
			With the auxiliary results derived in the previous subsection, we are at the stage to prove the SRIP of Bernoulli random matrices. In order to make the statement of the main theorem simple, we fix the following notations:
			
			\begin{itemize}
				\item For each $\beta\in(0,1)$, set
				$$\alpha_\beta(t):=(1-\beta)\ln(1-\beta)+\beta\ln(\beta)+\frac{t}{3}(1-\beta)$$
				for all $t\in(0,1)$.

			\item $t_{\pm 1}\approx 0.0376$ is the unique zero of the function 
			$$f(t)=(1-t)\ln(1-t)+t\ln(t)+\frac{1}{6}(1-t)$$
			on $(0,\frac{1}{2})$.
			\end{itemize}

			\begin{thm}[\textbf{the strong restricted isometry property of Bernoulli random matrices}]\label{sripb} Let $\beta\in(0,t_{\pm 1})$ and choose $q_\beta\in(0,\frac{1}{2})$ such that $\alpha_\beta(q_\beta)>0$. Let $s,m,n\in\mathbb{N}$, $\alpha\in(0,\min\{\alpha_\beta(q_\beta),\frac{1}{12}\})$ and $\epsilon\in(0,1)$ be such that
			$$n>\alpha^{-1}\left(s\ln\left(\frac{24em}{\epsilon s}\right)+\ln 2\right).$$ 
			Let $\Phi$ be an $n\times m$ Bernoulli random matrix. Then
				 
				$$\begin{aligned}
				&\mathbb{P}\left(\tilde{\theta}_{\epsilon,\beta}\|u\|_2^2\leq\frac{1}{n}\|\Phi_T u\|_2^2\leq\tilde{\omega}_{\epsilon,\beta} \| u\|_2^2,\quad \forall\|u\|_0\leq s, |T^c|\leq\beta n\right)\\
				\geq &1-2\left(\frac{24em}{\epsilon s}\right)^s\exp(-\alpha n),
				\end{aligned}$$
				and
				$$\begin{aligned}
				&\mathbb{P}\left(\theta_{\epsilon,\beta}\| u\|_2^2\leq\frac{1}{|T|}\| \Phi_T u\|_2^2\leq \omega_{\epsilon,\beta}\| u\|_2^2,\quad \forall\|u\|_0\leq s, |T^c|\leq\beta n\right)\\
				\geq &1-2\left(\frac{24em}{\epsilon s}\right)^s\exp(-\alpha n),
				\end{aligned}$$
				where $\tilde{\theta}_{\epsilon,\beta}, \tilde{\omega}_{\epsilon,\beta},\theta_{\epsilon,\beta},\omega_{\epsilon,\beta}$ are positive constants which only depend on $\beta$ and $\epsilon$.

			\end{thm}

			\begin{proof}The proof is based on the the ideas from the proofs of [\cite{srip}, Theorem 2.1] and [\cite{sprip}, Lemma 5.1].
			
				 Let $T\subseteq\{1,\dots,n\}$ be such that $|T^c|\leq\beta n$. Define
				 \begin{equation}\label{ttilde}\tilde{\theta}=(1-\beta)\min\left\{ \frac{1}{2}-3(1-\beta)^{-1}\left[\alpha-\ln\left((1-\beta)^{1-\beta}\beta^{\beta}\right)\right],\frac{1}{2}-3\alpha\right\},\end{equation}
				 \begin{equation}\label{otilde}\tilde{\omega}=1+\sqrt{12\alpha},\end{equation}
				 \begin{equation}\label{o}\omega=\left(\sqrt{\frac{5\alpha}{1-\beta}}+\sqrt{2e\ln\frac{e}{1-\beta}}\right)^2.\end{equation}
				 
				  By Lemma\autoref{ltb} and \autoref{eo}, with probability at least $1-2e^{-\alpha n}$, we have
				$$\sqrt{\tilde{\theta}(1-\epsilon)}\| u\|_2\leq\frac{1}{\sqrt{n}}\Vert \Phi_T u\Vert_2\leq\sqrt{\tilde{\omega}(1+\epsilon)}\|u\|_2$$
				for any $\epsilon\in(0,1)$. Let $S$, $ \mathbb{S}^{m-1}_S$ and $Q_{S,\epsilon}$ be the same as in Lemma\autoref{net}. Define
				$$d=\sup\left\{\frac{1}{\sqrt{n}}\| \Phi_Tu\|_2: u\in\mathbb{S}^{m-1}_S, |T^c|\leq\beta n\right\}.$$
				For every $u\in \mathbb{S}^{m-1}_S$, there exists $v_u\in Q_{S,\epsilon}$ such that $\| u-v_u\|_2\leq\frac{\epsilon}{8}$. Hence
				$$\frac{1}{\sqrt{n}}\| \Phi_Tu\|_2\leq \frac{1}{\sqrt{n}}\| \Phi_Tv_u\|_2+\frac{1}{\sqrt{n}}\| \Phi_T(u-v_u)\Vert_2\leq \sqrt{\tilde{\omega}(1+\epsilon)}+\frac{d\epsilon}{8}.$$
				By the definition of $d$, we have $d\leq\sqrt{\tilde{\omega}(1+\epsilon)}+\frac{d\epsilon}{8}$, which implies that $d\leq\sqrt{\tilde{\omega}(1+2\epsilon)}.$
				
				On the other hand, Lemma\autoref{ltb} and \autoref{eo} yield
				$$\begin{aligned}\frac{1}{\sqrt{n}}\| \Phi_Tu\|_2&\geq\frac{1}{\sqrt{n}}\| \Phi_Tv_u\|_2-\frac{1}{\sqrt{n}}\| \Phi_T(u-v_u)\|_2\geq \sqrt{\tilde{\theta}}-\frac{\epsilon}{8}\sqrt{\tilde{\omega}}.
				\end{aligned}$$
				Choose $\epsilon>0$ small enough such that $\sqrt{\tilde{\theta}}-\frac{\epsilon}{8}\sqrt{\tilde{\omega}}>0$. Set
				$$\tilde{\theta}_{\epsilon,\beta}:=\left(\sqrt{\tilde{\theta}}-\frac{\epsilon}{8}\sqrt{\tilde{\omega}}\right)^2,\quad\tilde{\omega}_{\epsilon,\beta}=\tilde{\omega}(1+2\epsilon),$$
				$$\theta_{\epsilon,\beta}=(1-\beta)^{-1}\tilde{\theta}_{\epsilon,\beta},\quad\omega_{\epsilon,\beta}=\omega(1+2\epsilon).$$
				One can see that with probability at least $1-2\left(\frac{24}{\epsilon}\right)^s\exp(-\alpha n)$, we have
				$$\tilde{\theta}_{\epsilon,\beta}\| u\|_2^2\leq\frac{1}{n}\| A_T u\|_2\leq\tilde{\omega}_{\epsilon,\beta}\| u\|_2^2,\quad \forall u\in\mathbb{R}^m\text{ with }\supp(u)\subseteq S,\quad\forall T\subseteq\{1,\dots,n\}\text{ with }|T^c|\leq\beta n.$$
				As there are $\binom{m}{s}$ subsets of $\{1,\dots,m\}$ with cardinality $s$, then by the union bound argument and Stirling's approximation $\binom{m}{s}\leq\left(\frac{em}{s}\right)^s$, we see that
				$$\begin{aligned}
				&\mathbb{P}\left(\tilde{\theta}_{\epsilon,\beta}\| u\|_2^2\leq\frac{1}{n}\| \Phi_T u\|_2^2\leq\tilde{\omega}_{\epsilon,\beta}\| u\|_2^2,\quad \forall\|u\|_0\leq s, |T^c|\leq\beta n\right)\\
				\geq &1-2\left(\frac{24em}{\epsilon s}\right)^s\exp(-\alpha n),
				\end{aligned}$$
				and similarly
					$$\begin{aligned}
					&\mathbb{P}\left(\theta_{\epsilon,\beta}\| u\|_2^2\leq\frac{1}{|T|}\|\Phi_T u\|_2^2\leq\omega_{\epsilon,\beta}\| u\|_2^2,\quad \forall\|u\|_0\leq s, |T^c|\leq\beta n\right)\\
					\geq &1-2\left(\frac{24em}{\epsilon s}\right)^s\exp(-\alpha n),
					\end{aligned}$$
				provided that $s\ln\frac{24em}{\epsilon s}<\alpha n-\ln 2$. Thus the proof is complete.\end{proof}

			Another result which we can establish is the following robust version of the Johnson-Lindenstrauss lemma:
			
			\begin{thm}[\textbf{robust Johnson-Lindenstrauss lemma for Bernoulli random matrices}]\label{rjl2}Let $\beta\in(0,t_{\pm 1})$ and choose $q_\beta\in(0,\frac{1}{2})$ such that $\alpha_\beta(q_\beta)>0$. Let $N,m,n\in\mathbb{N}$ and $\alpha\in(0,\min\{\alpha_\beta(q_\beta),\frac{1}{12}\})$ 
			be such that
			$$n>\alpha^{-1}\ln[N(N-1)].$$	
Let $\Phi$ be an $n\times m$ Bernoulli random matrix. Then for every $N$-point subset $\{p_1,\dots, p_N\}$ of $\mathbb{R}^m$, we have
				$$\begin{aligned}&\mathbb{P}\left\{\tilde{\theta}\| p_j-p_k\|_2^2\leq\frac{1}{n}\| \Phi_T(p_j-p_k)\|_2^2\leq\tilde{\omega}\| p_j-p_k\|_2^2,\quad\forall |T^c|\leq\beta n, 1\leq j,k\leq N, j\neq k\right\}\\
				\geq &1-N(N-1)\exp(-\alpha n),
				\end{aligned}$$
				$$\begin{aligned}&\mathbb{P}\left\{\frac{\tilde{\theta}}{1-\beta}\| p_j-p_k\|_2^2\leq\frac{1}{|T|}\| \Phi_T(p_j-p_k)\|_2^2\leq\omega\| p_j-p_k\|_2^2,\quad\forall |T^c|\leq\beta n, 1\leq j,k\leq N, j\neq k\right\}\\
				\geq &1-N(N-1)\exp(-\alpha n),
				\end{aligned}$$
				where $\tilde{\theta}$, $\tilde{\omega}$ and $\omega$ are defined as \eqref{ttilde}, \eqref{otilde} and \eqref{o}.
			\end{thm}
			
			\begin{proof}By Lemma\autoref{ltb} and\autoref{eo}, we have
				$$\begin{aligned}&\mathbb{P}\left\{\tilde{\theta}\| p_j-p_k\|_2^2\leq\frac{1}{n}\| \Phi_T(p_j-p_k)\|_2^2\leq\tilde{\omega}\| p_j-p_k\|_2^2,\quad\forall |T^c|\leq\beta n\right\}\\
				\geq &1-2\exp(-\alpha n)
				\end{aligned}$$
				and
				$$\begin{aligned}&\mathbb{P}\left\{\frac{\tilde{\theta}}{1-\beta}\| p_j-p_k\|_2^2\leq\frac{1}{|T|}\| \Phi_T(p_j-p_k)\|_2^2\leq\omega\| p_j-p_k\|_2^2,\quad\forall |T^c|\leq\beta n\right\}\\
				\geq &1-2\exp(-\alpha n)
				\end{aligned}$$
				for every pair $(j,k)$ with $1\leq j,k\leq N$ and $j\neq k$. As there are $\binom{N}{2}=\frac{N(N-1)}{2}$ pairs $(p_j,p_k)$ with $j\neq k$, then by the union bound argument and the assumption $n>\alpha^{-1}\ln[N(N-1)]$, the result follows.\end{proof}

			\section{Summary and Discussion}
			
			In this paper, we have studied the erasure robustness property of Bernoulli random matrices. We've proved the SRIP and a robust version of the Johnson-Lindenstrauss lemma for Bernoulli random matrices. From our analysis, we see that if the portion of rows erasured is smaller than the number $t_{\pm 1}$ which was provided in Theorem\autoref{sripb}, then a Bernoulli random matrix will satisfy the SRIP with high probability. One natural question to ask is that, what happens if the erasure ratio is above $t_{\pm 1}$? Moreover, we are interested in the optimal upper bound of the erasure ratio for the SRIP to hold. We know that an upper bound is $\frac{1}{2}$, but whether or not it's optimal is unknown. It seems that to fully solve this problem, significantly new ideas and techniques are required.

			\section*{Acknowledgements}
			I would like to express my sincere gratitude to my supervisor Professor Bin Han, for his patient help and encouragements.

			\newpage

		\end{flushleft}
	\end{spacing}
\end{document}